\renewenvironment{description}[1][0pt]
  {\list{}{\labelwidth=0pt \leftmargin=#1
   }}
  {\endlist}
\newtheorem{theorem}{Theorem}
\begin{document}

\title{\bf Some stochastic process techniques applied to deterministic models}
\author{
Eric Jos\'e \'Avila-Vales\\
Facultad de Matem\'aticas\\
Universidad Aut\'onoma de Yucat\'an\\
Anillo Perif\'erico Norte, Tablaje Catastral 13615\\ 
M\'erida, C.P. 97119, Yucat\'an, M\'exico\\
{\tt avila@correo.uady.mx}
\and 
José Villa-Morales\\
Departamento de Matemáticas y Física\\
Universidad Autónoma de Aguascalientes\\
Av. Universidad No. 940\\
Aguascalientes, Ags., México\\
{\tt jvilla@correo.uaa.mx }}
\date{\today}
\maketitle

\begin{abstract}

Stochastic mathematical models are essential tools for understanding and predicting complex phenomena. The purpose of this work is to study the exit times of a stochastic dynamical system—specifically, the mean exit time and the distribution of exit times of the stochastic process within a bounded domain. These quantities are obtained by solving elliptic and parabolic partial differential equations (PDEs), respectively. To support practical applications, we propose a numerical scheme implemented in FreeFEM, emphasizing its effectiveness in two- and three-dimensional cases due to the software's limitations in higher dimensions. The examples provided illustrate the theoretical results, which extend known one-dimensional solutions to higher-dimensional settings. This contribution bridges theoretical and computational approaches for analyzing stochastic processes in multidimensional domains, offering insights into their behavior and potential applications.

\medskip
\noindent {\it Keywords:} Partial differential equations, stochastic models, exit mean time.\\
{\it MSC} [2020] 35B09,  60G53, 92B05, 65M06
\end{abstract}

\maketitle

\section{Introduction}

Currently, many decision‐making processes rely on mathematical models, underscoring the importance of their rigorous study.  A well‐designed mathematical model aims to describe the phenomenon of interest as comprehensively as possible, incorporating only those parameters and variables that have a significant impact and omitting extraneous factors that would render the model impractical (see \cite{isham2005stochastic}).  

On the other hand, as is often the case in time‐dependent models, the system’s conditions may change from one day to the next.  Consequently, the predictions of a purely deterministic model can become unreliable or irrelevant (see \cite{pinsky2010introduction,morgan2008applied}).  A common—and sometimes effective—remedy is to adopt a model that accounts for such variations, including the uncertainty arising from imprecise parameter measurements (see \cite{goel2013stochastic}).  

Thus, if we assume that a deterministic model captures the “average" behavior of a phenomenon, then, in general terms, a stochastic model can be viewed as accounting for unpredictable perturbations or effects that are difficult to include explicitly.  Our first step, therefore, is to construct a stochastic counterpart.  There are multiple methods to achieve this (for instance, see \cite{mao2007stochastic}).  When the underlying dynamics are governed by a system of ordinary differential equations, a standard approach is to posit that, over small time intervals, the system undergoes random state transitions with prescribed probabilities (see \cite{allen2008construction,allen2007modeling}).  In Section \ref{SecMod}, we demonstrate—in a generic example—how to derive a stochastic dynamical system from a deterministic one (see Theorem \ref{ThExEDE}).  

Once we have the stochastic model, we use stochastic calculus techniques to study some functionals of interest. Namely, given a region of particular interest (D), where the process starts, we determine the average time the process takes to leave that region (Theorem \ref{TE}) and, similarly, we find the distribution of that exit time (Theorem \ref{TP}). These quantities depend on the solution of certain Partial Differential Equations (PDEs), the first of an Elliptic Equation (\ref{PDir}) and the second of a Parabolic Equation (\ref{PVI}), see \cite{karatzas1991brownian,friedman1975stochastic}.

The next step is to propose a solution scheme for these equations, see Section \ref{SecNum}. Since our purpose is for the theoretical results to be applied by a broad academic sector, we undertook the task of investigating the various software available to solve PDEs. Among them, some are paid and others are free access, the vast majority. We looked for, among other software characteristics, that it had certain previous versions to ensure the stability and quality of the results, that it was easy to code, and that it was used by a large community to ensure the existence of support forums. Among them, we chose FreeFEM, version 4.13, for being free software and meeting the aforementioned characteristics. Moreover, it is worth noting that some artificial intelligences know the FreeFEM language, which helps in coding. It is important to mention that, of all the software consulted, none of them solve PDE problems in dimensions greater than 4. There are two reasons for this: the first is that most applications focus on physics, and the second is that as dimensions increase, numerical methods quickly lose precision. Therefore, if a specific problem of interest arises in higher dimensions, a numerical method from the many that exist can be applied, see \cite{smith1985numerical}. The theoretical results considered in this work can be framed in any dimension; however, the limitation of the software to solve PDEs in dimensions no greater than 4 means that our exposition is limited to dimensions 2 and 3. The case of dimension 1 is well known, see \cite{oksendal2013stochastic}, which is why we have omitted it.

In summary, our contribution is as follows: given a stochastic dynamical system and a specified starting region, we employ partial differential equations and numerical‐analysis techniques to study the mean exit time and its distribution from that region.  To the best of our knowledge, these results are well established in one dimension but have not been derived—or at least not implemented—for higher‐dimensional settings.  To demonstrate the effectiveness of our approach, Section \ref{SecEx} presents illustrative examples from the literature that showcase the insights provided by our theoretical findings.

The work is organized as follows. In Section \ref{SecMod}, we introduce the stochastic model and show that it has a solution, $Y$. If $Y(0) \in D$, where $D$ is a bounded domain, in Section \ref{SecPDE}, we determine the expected mean exit time of $Y$ from the set $D$ and its distribution. In Section \ref{SecNum}, the numerical scheme is discussed, and the FreeFEM code is generally described, see \cite{MR3043640}. Finally, in Section \ref{SecEx}, several examples taken from the literature, are presented, and their solutions are discussed.

\section{Some Stochastic Models} \label{SecMod}

As noted in Section 1, the results we develop can be applied broadly to the solution of any SDE.  As a representative example, we employ the construction method described in \cite{allen2007modeling} to derive a stochastic model. Let us consider a phenomenon whose dynamic $Y(t)=(y_{1}(t),y_{2}(t),y_{3}(t))^{T}$ is governed by the following system of differential equations
\begin{eqnarray*} 
\frac{dy_{1}}{dt} & = & f_{1}(y_{1}) - c_{1,1}g_{1}(y_{1},y_{2}) - c_{1,2}g_{2}(y_{1},y_{3}), \\
\frac{dy_{2}}{dt} & = & f_{2}(y_{2}) - c_{2,1}g_{1}(y_{1},y_{2}) - c_{2,2}g_{3}(y_{2},y_{3}), \\ \frac{dy_{3}}{dt} & = & f_{3}(y_{3}) - c_{3,1}g_{3}(y_{2},y_{3}) - c_{3,2}g_{2}(y_{1},y_{3}), 
\end{eqnarray*} 
where $c_{i,j}\geq 0$ are real constants, and $f_{i}:\mathbb{R} \to \mathbb{R}$, $g_{i}:\mathbb{R}\times \mathbb{R} \to \mathbb{R}$ are non-negative continuous functions.

The components of the deterministic system can be described as follows:
\begin{description} 
\item[•] The function $f_{i}(y_{i})$ represents the growth rate of the variable $y_{i}$, independent of other variables.
\item[•] The function $g_{n}(y_{i},y_{j})$ represents the reduction in the growth rate of the variable $y_{m}$ due to the influence of the variables $y_{i}$ and $y_{j}$. This contribution is weighted by the constant $c_{i,j}$.
\end{description}

Suppose that over a small time interval $\Delta t$, the vector $\Delta Y$ can take only the following seven states:
\begin{gather*} 
(1,0,0)^{T}, \quad (0,1,0)^{T}, \quad (0,0,1)^{T}, \quad (-c_{1,1},-c_{2,1},0)^{T}, \\
(-c_{1,2},0,-c_{3,2})^{T}, \quad (0,-c_{2,2},-c_{3,1})^{T}, \quad (0,0,0)^{T}. 
\end{gather*} 
The probabilities associated with these states are shown in Table \ref{TaPr}.
\begin{table}[H] 
\centering 
\begin{tabular}{ l | l } Change & Probability \\
\hline \hline $\Delta Y^{1}= (1,0,0)^{T}$ & $p_{1}:= f_{1}(y_{1}) \Delta t$ \\
$\Delta Y^{2}=(0,1,0)^{T}$ & $p_{2}:= f_{2}(y_{2}) \Delta t$ \\
$\Delta Y^{3}=(0,0,1)^{T}$ & $p_{3}:= f_{3}(y_{3}) \Delta t$ \\
$\Delta Y^{4}=(-c_{1,1},-c_{2,1},0)^{T}$ & $p_{4}:= g_{1}(y_{1},y_{2}) \Delta t$ \\
$\Delta Y^{5}=(-c_{1,2},0,-c_{3,2})^{T}$ & $p_{5}:= g_{2}(y_{1},y_{3}) \Delta t$ \\
$\Delta Y^{6}=(0,-c_{2,2},-c_{3,1})^{T}$ & $p_{6}:= g_{3}(y_{2},y_{3}) \Delta t$ \\
$\Delta Y^{7}=(0,0,0)^{T}$ & $p_{7}:=1- \sum_{i=1}^{6} p_{i}$ 
\end{tabular} 
\caption{Possible changes with their corresponding probabilities.} \label{TaPr} 
\end{table}
From this, the drift vector is given by
\begin{equation*} 
B = \left( \begin{array}{c} b_{1} \\ b_{2} \\ b_{3} \end{array} \right) 
:= \frac{E[\Delta Y]}{\Delta t} = \frac{1}{\Delta t} \sum_{i=1}^{7} p_{i} \Delta Y^{i} 
= \left( \begin{array}{c} f_{1} - c_{1,1}g_{1} - c_{1,2}g_{2} \\ 
f_{2} - c_{2,1}g_{1} - c_{2,2}g_{3} \\
f_{3} - c_{3,1}g_{3} - c_{3,2}g_{2}
\end{array} \right), 
\end{equation*} 
and the diffusion matrix is
\begin{eqnarray} 
A =(a_{i,j}) & := & \frac{E[\Delta Y (\Delta Y)^{T}]}{\Delta t} = \frac{1}{\Delta t} \sum_{i=1}^{7} p_{i} \Delta Y^{i} (\Delta Y^{i})^{T} \nonumber\\
&=& \left( \begin{array}{ccc} f_{1} + c_{1,1}^{2}g_{1} + c_{1,2}^{2}g_{2} & c_{1,1}c_{2,1}g_{1} & c_{1,2}c_{3,2}g_{2}  \\ 
c_{2,1}c_{1,1}g_{1} & f_{2} + c_{2,1}^{2}g_{1} + c_{2,2}^{2}g_{3} & c_{2,2}c_{3,1}g_{3}  \\
c_{3,2}c_{1,2}g_{2} & c_{3,1}c_{2,2}g_{3} & f_{3} + c_{3,2}^{2}g_{2} + c_{3,1}^{2}g_{3}
\end{array} \right). \label{matA}
\end{eqnarray}
Note that $A$ is positive definite, so it has a unique square root $\sigma$, such that $A = \sigma \sigma^{T}$.

Let $p(t,x_{1},x_{2},x_{3})$ be the probability that the process $X$ is in state $(x_{1},x_{2},x_{3})$ at time $t$. Using the transition probabilities from Table \ref{TaPr} and letting $\Delta t$ approach $0$, it can be shown that the probabilities $p(t,x_{1},x_{2},x_{3})$ satisfy the Fokker-Planck equation (see page 139 in \cite{allen2007modeling}): 
\begin{eqnarray*} 
\frac{\partial p(t,x_{1},x_{2},x_{3})}{\partial t} 
& = & \frac{1}{2} \sum_{i=1}^{3} \sum_{j=1}^{3} \frac{\partial^{2}}{\partial x_{i} \partial x_{j}} \left[ \sum_{k=1}^{3} \sigma_{i,k}(x_{1},x_{2},x_{3})\sigma_{j,k}(x_{1},x_{2},x_{3})p(t,x_{1},x_{2},x_{3}) \right] \\
&& - \sum_{i=1}^{3} \frac{\partial}{\partial x_{i}}[ b_{i}(x_{1},x_{2},x_{3})p(t,x_{1},x_{2},x_{3})]. 
\end{eqnarray*}

On the other hand, the probabilities $p(t,x_{1},x_{2},x_{3})$ correspond to the solution of the stochastic differential equation (see Theorem 10.9.10 in \cite{kuo2006stochastic}) \begin{equation} \label{sisEDEw} 
\left\lbrace 
\begin{tabular}{ l l l l } 
$dY(t)$ & $=$ & $B(Y(t))dt+ \sigma (Y(t))dW(t),$ & $t>0$, \\ 
$Y(0)$ & $=$ & $y^{0}$, &
\end{tabular} \right. 
\end{equation} 
where $W=(W_{1},W_{2},W_{3})^{T}$ is a Brownian motion in $\mathbb{R}^{3}$.

Next, we will show that the system (\ref{sisEDEw}) indeed has a unique solution.

\begin{theorem} \label{ThExEDE} 
Suppose that the functions $f_{i}$ and $g_{i}$ are of class $C^{2}(\mathbb{R})$ and $C^{2}(\mathbb{R}^{2})$, respectively, for $i=1,2,3$. Then the system (\ref{sisEDEw}), with initial condition $Y(0)=y^{0}$, has a unique solution in the strong sense. That is, there exists a probability space $(\Omega, \mathcal{F},P)$, a Brownian motion $W$, and a stochastic process $Y={Y(t): 0\leq t < \infty }$ such that: 
\begin{description} 
\item[(i)] $Y$ is adapted to the filtration generated by $W$. 
\item[(ii)] $P[\int_{0}^{t}{|b_{i}(Y(s))|+\sigma_{i,j}^{2}(Y(s))}ds< \infty]=1$ holds for every $i,j=1,2,3$ and $0<t<\infty$. 
\item[(iii)] Moreover, if $Y^{0}=(y_{1}^{0},y_{2}^{0},y_{3}^{0})^{T}$ and $Y(t)=(y_{1}(t),y_{2}(t),y_{3}(t))^{T}$, then 
\begin{equation}\label{sisEDEI} 
Y(t) = Y^{0} + \int_{0}^{t}B(Y(s)) ds + \int_{0}^{t} \sigma (Y(s)) dW(s), 
\end{equation} 
holds almost surely. Recall that the stochastic integral is in the It\^o sense. 
\end{description} 
\end{theorem}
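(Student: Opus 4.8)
The plan is to treat (\ref{sisEDEw}) as an It\^o SDE with locally Lipschitz coefficients, apply the classical strong existence--uniqueness theorem to obtain a solution up to an explosion time, and then show that the explosion time is infinite; properties (i)--(iii) then fall out of the construction. \emph{First}, I would check local Lipschitz continuity of $B$ and $\sigma$. The components $b_1,b_2,b_3$ are finite linear combinations of products of the $C^{2}$ functions $f_i$ and $g_i$, so $B\in C^{2}(\mathbb{R}^{3})$ and in particular is locally Lipschitz. For $\sigma$, observe that in (\ref{matA}) the matrix $A=A(y)$ has $C^{2}$ entries and, as remarked just after (\ref{matA}), takes values in the open cone $\mathcal S^{+}$ of symmetric positive definite $3\times 3$ matrices; on $\mathcal S^{+}$ the principal square root $M\mapsto M^{1/2}$ is $C^{\infty}$ (e.g.\ by the implicit function theorem applied to $N\mapsto N^{2}$, whose differential $H\mapsto NH+HN$ is invertible at every $N\in\mathcal S^{+}$, or via holomorphic functional calculus / an integral representation of $M^{1/2}$). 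Hence $y\mapsto\sigma(y)=A(y)^{1/2}$ is $C^{2}$, in particular locally Lipschitz and locally bounded; the local boundedness is exactly what gives (ii) once we have a continuous solution, since then $|b_i(Y(s))|$ and $\sigma_{i,j}^{2}(Y(s))$ stay bounded on any $[0,t]$.

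\emph{Next}, with locally Lipschitz coefficients the standard theory applies: truncating $B$ and $\sigma$ smoothly outside each ball $\{|y|\le n\}$ makes them globally Lipschitz, Picard iteration produces a unique strong solution $Y^{(n)}$ of the truncated system, these agree on $[0,\tau_n)$ with $\tau_n:=\inf\{t\ge 0:|Y(t)|\ge n\}$, and patching them yields a unique strong solution $Y$ of (\ref{sisEDEw}) on $[0,\tau_\infty)$, $\tau_\infty:=\lim_n\tau_n$, adapted to the (suitably augmented) filtration generated by $W$ — this is (i) — and satisfying the integral identity (\ref{sisEDEI}) on $[0,\tau_\infty)$ — this is (iii). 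Pathwise uniqueness is the usual Gr\"onwall estimate on each ball. See \cite{karatzas1991brownian,kuo2006stochastic}.

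The main obstacle is the last step, namely $\tau_\infty=\infty$ almost surely, which is precisely what makes $Y$ well defined on all of $[0,\infty)$. This genuinely needs more than smoothness — already the scalar ODE $\dot y=f_1(y)$ with $f_1(y)=y^{2}\in C^{2}$ blows up in finite time — so one invokes a growth control such as $|B(y)|+\|\sigma(y)\|\le K(1+|y|)$; under the stated hypotheses it is natural either to add this as an assumption or, as in every example of Section \ref{SecEx}, to verify it in the case at hand. Granting it, I would apply It\^o's formula to $V(y)=1+|y|^{2}$ stopped at $\tau_n$: the martingale term has zero expectation, the drift of $V$ is bounded by $C(1+V)$, and Gr\"onwall gives $\mathbb{E}\,V(Y(t\wedge\tau_n))\le V(y^{0})e^{Ct}$ uniformly in $n$; since $V(Y(\tau_n))\ge 1+n^{2}$ on $\{\tau_n\le t\}$, it follows that $P(\tau_n\le t)\le V(y^{0})e^{Ct}/(1+n^{2})\to 0$, hence $\tau_\infty=\infty$ a.s. This closes the argument and delivers (i)--(iii). (If one insists on keeping only the $C^{2}$ hypothesis, the conclusion still holds up to $\tau_\infty$, which is enough for the exit-time analysis of Section \ref{SecPDE} when $D$ is bounded, since $Y$ is stopped upon leaving $D$.)
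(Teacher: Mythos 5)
Your proposal is correct and follows essentially the same route as the paper: show that $\sigma=A^{1/2}$ inherits the regularity of $A$ (the paper cites Theorem 1.2 of Chapter 6 of Friedman for this, where you argue directly via the implicit function theorem on the cone of positive definite matrices --- note Friedman's version only needs $A$ nonnegative definite with $C^{2}$ entries, which matters since the paper's claim that $A$ is positive definite is not obviously uniform), conclude that $B$ and $\sigma$ are locally Lipschitz, and invoke the standard localization/Picard argument for strong existence and pathwise uniqueness up to the explosion time. Where you add value is on the explosion issue: the paper's own proof only delivers a solution ``which may explode in finite time'' and defers the discrepancy with the statement ($0\le t<\infty$) to a remark after the proof, whereas you make the gap explicit, supply the standard Lyapunov--Gr\"onwall argument showing $\tau_\infty=\infty$ under a linear-growth hypothesis, and correctly observe that for the exit-time analysis of Section \ref{SecPDE} the solution up to $\tau_\infty$ suffices because $D$ is bounded. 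In short, your proof is at least as complete as the paper's, and more candid about what the $C^{2}$ hypothesis alone does and does not give.
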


\begin{proof} 
The first thing to observe is that the smoothness conditions of the components $a_{i,j}$ of $A$ are inherited by the components $\sigma_{i,j}$ of $\sigma$. This can be verified in Theorem 1.2 of Chapter 6 of \cite{friedman1975stochastic}. Thus, in our case, the hypotheses on the functions $f_{i}$ and $g_{i}$ imply that the functions $b_{i}$ and $\sigma_{i,j}$ are smooth functions. Therefore, the uniqueness of the solution follows immediately from the fact that every smooth function is locally Lipschitz, see for example Theorem 2.5 of Chapter 5 of \cite{karatzas1991brownian}. Furthermore, it follows that the system (\ref{sisEDEI}) has a strong solution, which may explode in finite time. For details, see for example Section 4.3 of Chapter 4 of \cite{mckean2024stochastic}. 
\end{proof}

It is known, see Theorem 2.9 of Chapter 5 of \cite{karatzas1991brownian}, that when the components of the drift vector $B$ and the components of the dispersion matrix $\sigma$ have linear growth, the system (\ref{sisEDEI}) has a global solution, that is, $Y(t)\in \mathbb{R}^{3}$ for every $t>0$. However, in our case, where the components of the vector and the matrix are smooth functions, explosion in finite time may occur. That is, there exists a stopping time $\mathfrak{e}$ such that $|Y(\mathfrak{e-})|=\infty$ on the event ${\mathfrak{e}<\infty}$, see \cite{gikhman2007stochastic}.

Except in the case of dimension $2$, it is generally not easy to explicitly determine the positive root of the matrix $A$. Fortunately, for the applications we have in mind, it is not necessary to know the root explicitly. It is sufficient to know that the positive root $\sigma$ exists and inherits the smoothness properties of the components of $A$ to such an extent that (\ref{sisEDEI}) has a unique strong solution. It should be noted that other approaches require increasing variability to consider a stochastic model. That is, they require introducing a Brownian motion in dimensions greater than or equal to $3$, see for example \cite{allen2008construction}.

\section{Connection with Partial Differential Equations} \label{SecPDE}

In the following, we will see how PDEs relate to the stochastic model.

Let $D \subset \mathbb{R}^{3}$ be a bounded domain (a set that is open and connected). Consider the differential operator 
\begin{equation} 
Lu := \frac{1}{2} \sum_{i=1}^{3} \sum_{j=1}^{3} a_{i,j} \ \frac{\partial^{2} u}{\partial x_{i}\partial x_{j}} + \sum_{i=1}^{3} b_{i} \ \frac{\partial u}{\partial x_{i}}, \quad u \in C^{2}(D). \label{GIE} 
\end{equation} 
We say that a point $x \in \partial D$ satisfies the exterior sphere condition if there exists an open ball centered at $x$, $B(x)$, such that $\overline{B(x)} \cap D = \emptyset$ and $\overline{B(x)} \cap \overline{D} = {x}$. With these preliminaries, we can state the following result, commonly known as the Dirichlet Problem.

\begin{theorem} \label{THExE} 
Assume that the functions $f_{i}$, $g_{i}$ satisfy the hypotheses of Theorem \ref{ThExEDE}. Additionally, suppose that $\inf_{x \in \overline{D}} a_{i,j}(x) > 0$, $i,j \in {1,2,3}$. If the points of $\partial D$ satisfy the exterior sphere condition, then there exists a unique function $u$ of class $C(\overline{D}) \cap C^{2}(D)$ such that 
\begin{equation}\label{PDir} 
\left\lbrace \begin{tabular}{ l l } $Lu(x) = -1$, & $x \in D$, \\ 
$u(x) = 0$, & $x \in \partial D$.
\end{tabular} \right. 
\end{equation} 
\end{theorem}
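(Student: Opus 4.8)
The plan is to read (\ref{PDir}) as a classical Dirichlet problem for a second‑order, uniformly elliptic operator in non‑divergence form with vanishing zeroth‑order coefficient, deducing existence from Perron's method combined with Schauder interior estimates and uniqueness from the weak maximum principle. Since $D$ is bounded, no probabilistic input and no non‑explosion argument are needed at this stage; the stochastic interpretation of $u$ is postponed to Theorem \ref{TE}.

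First I would record the structural properties of $L$ on $\overline{D}$. The coefficients $b_{i}$ and $a_{i,j}$ are built algebraically from the functions $f_{i}\in C^{2}(\mathbb{R})$ and $g_{i}\in C^{2}(\mathbb{R}^{2})$, hence $b_{i},a_{i,j}\in C^{2}(\mathbb{R}^{3})$; restricted to the compact set $\overline{D}$ they are bounded and lie in $C^{\alpha}(\overline{D})$ for every $\alpha\in(0,1)$. Moreover each $A(x)$ is symmetric and positive semidefinite (being a nonnegative combination of rank‑one matrices, cf. (\ref{matA})), in fact dominating $\mathrm{diag}(f_{1}(x),f_{2}(x),f_{3}(x))$; together with the hypothesis $\inf_{x\in\overline{D}}a_{i,j}(x)>0$ and the continuity of $A$ on $\overline{D}$ this yields uniform ellipticity: there is $\lambda_{0}>0$ with $\sum_{i,j=1}^{3}a_{i,j}(x)\xi_{i}\xi_{j}\ge\lambda_{0}|\xi|^{2}$ for all $x\in\overline{D}$, $\xi\in\mathbb{R}^{3}$. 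Finally, $L$ has $c\equiv 0\le 0$, so the maximum principle is available.

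Next I would convert the exterior sphere condition into regularity of the boundary points. Fix $x_{0}\in\partial D$ and let $B(x_{0})=B_{R}(z)$ be the exterior ball, so $|y-z|\ge R$ for every $y\in\overline{D}$, with equality only at $y=x_{0}$. Put $w_{0}(y):=R^{-p}-|y-z|^{-p}$ for $p>0$; then $w_{0}$ is smooth near $\overline{D}$ (since $z\notin\overline{D}$), $w_{0}(x_{0})=0$, and $w_{0}>0$ on $\overline{D}\setminus\{x_{0}\}$. A direct computation of $Lw_{0}$, using $\sum_{i,j}a_{i,j}(y)(y-z)_{i}(y-z)_{j}\ge\lambda_{0}|y-z|^{2}$, the bounds on $\mathrm{tr}\,A$ and on $|b_{i}|$ over $\overline{D}$, and $R\le|y-z|\le R+\mathrm{diam}(D)$, shows that for $p$ large enough (depending only on $\lambda_{0}$, $\sup_{\overline{D}}|b_{i}|$, $\sup_{\overline{D}}\mathrm{tr}\,A$ and $\mathrm{diam}(D)$) one has $Lw_{0}\le-\delta<0$ on $\overline{D}$; scaling, $w:=\delta^{-1}w_{0}$ is a barrier at $x_{0}$, i.e. $w\in C(\overline{D})$, $w>0$ on $\overline{D}\setminus\{x_{0}\}$, $w(x_{0})=0$ and $Lw\le-1$ in $D$. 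Hence every boundary point of $D$ is regular for $L$.

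With $D$ bounded, $L$ uniformly elliptic with $C^{\alpha}(\overline{D})$ coefficients and $c\equiv 0$, and a barrier at every boundary point, the Perron--Schauder construction for the Dirichlet problem (see \cite{friedman1975stochastic,karatzas1991brownian}, and the classical Schauder theory for the purely analytic statement) provides a function $u\in C^{2,\alpha}(D)\cap C(\overline{D})$ with $Lu=-1$ in $D$ and $u=0$ on $\partial D$; in particular $u\in C(\overline{D})\cap C^{2}(D)$. For uniqueness, if $u_{1},u_{2}$ are two such solutions then $v:=u_{1}-u_{2}\in C(\overline{D})\cap C^{2}(D)$ satisfies $Lv=0$ in $D$ and $v=0$ on $\partial D$; since $c\equiv 0$, the weak maximum principle gives $\max_{\overline{D}}v=\max_{\partial D}v=0$ and, applied to $-v$, $\min_{\overline{D}}v=0$, so $v\equiv 0$. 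The only step requiring genuine work is the barrier construction of the previous paragraph; everything else follows from the smoothness and uniform ellipticity of the coefficients together with standard elliptic theory. One could instead define $u(x):=E_{x}[\tau_{D}]$ with $\tau_{D}$ the first exit time of $Y$ from $D$ and verify (\ref{PDir}) via Dynkin's formula and interior Schauder estimates, which is legitimate because $D$ is bounded, but this anticipates Theorem \ref{TE}, so the analytic argument above is the natural one to present first.
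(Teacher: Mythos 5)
Your argument is correct and follows essentially the same route as the paper: uniform ellipticity of $L$ on the bounded domain, H\"older/Lipschitz continuity of the coefficients inherited from the smoothness of $f_{i},g_{i}$, and the exterior sphere condition reduce (\ref{PDir}) to the classical Dirichlet problem, with existence from the Perron--Schauder theory and uniqueness from the weak maximum principle (since the zeroth-order coefficient vanishes); the paper simply cites \cite{gilbarg1977elliptic} and \cite{friedman2008partial} at the points where you spell out the barrier construction and the maximum-principle step. The one substantive divergence is the justification of uniform ellipticity: the paper invokes the bound $\sum_{i,j}a_{i,j}\xi_{i}\xi_{j}\geq\bigl(\min_{i,j}\inf_{\overline{D}}a_{i,j}\bigr)|\xi|^{2}$, which is not a valid consequence of entrywise positivity alone (the all-ones matrix annihilates $\xi=(1,-1,0)^{T}$), whereas your route through the rank-one decomposition of $A$ in (\ref{matA}), its pointwise positive definiteness, and compactness of $\overline{D}$ is the sounder way to produce a uniform ellipticity constant.
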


\begin{proof} 
Let $\mu := \min_{i,j \in {1,2,3}} \inf_{x \in \overline{D}} a_{i,j}(x)$, then 
\begin{equation} 
\sum_{i=1}^{3} \sum_{j=1}^{3} a_{i,j} \ \xi_{i} \xi_{j} \geq \mu |\xi|^{2}, \label{posdef} \end{equation} 
for every $\xi = (\xi_{1}, \xi_{2}, \xi_{3})^{T} \in \mathbb{R}^{3}$. This implies that the operator $L$ is uniformly elliptic in $D$. Moreover, since the functions $f_{i}$, $g_{i}$ are smooth and $D$ is bounded, the coefficients $b_{i}$ and $a_{i,j}$ are Lipschitz continuous. Furthermore, since the points of $\partial D$ satisfy the exterior sphere condition, the required conditions for the Dirichlet problem are met. Therefore, there exists a unique solution in the desired function space (see page 101 in \cite{gilbarg1977elliptic} or page 87 in \cite{friedman2008partial}). 
\end{proof}

Consider the stochastic process $Y$ from Theorem \ref{ThExEDE}. We define the first exit time of the process $Y$ from the bounded domain $D$ as the stopping time $$\tau_{D} = \inf {t \geq 0: Y(t) \notin D}.$$ Since $\inf_{x \in \overline{D}} a_{i,i}(x) > 0$, it follows (see Lemma 7.4, Chapter 5 of \cite{karatzas1991brownian}) that 
\begin{equation} 
E^{x}[\tau_{D}] < \infty, \quad \text{for all} \quad x \in D, \label{MedH} 
\end{equation} 
where the expectation $E^{x}$ is taken with respect to the probability measure $P^{x}[\cdot] := P[\cdot | Y(0) = x]$, from Theorem \ref{ThExEDE}. Furthermore, this expected value is determined by the solution of Theorem \ref{THExE}.

\begin{theorem} \label{TE} 
Let $u$ be the solution to the Dirichlet problem (\ref{PDir}), assuming the previous hypotheses for the existence of such a solution. Then 
\begin{equation} 
u(x) = E^{x}[\tau_{D}], \quad \text{for all} \quad x \in \overline{D}. \label{VEH} 
\end{equation} 
\end{theorem}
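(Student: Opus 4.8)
The plan is to identify $u(x) = E^x[\tau_D]$ via a standard martingale argument based on Dynkin's formula (It\^o's formula applied to $u(Y(t))$ up to the stopping time $\tau_D$). First I would note that by Theorem \ref{THExE} the solution $u$ belongs to $C(\overline D)\cap C^2(D)$; since $D$ is bounded, $u$ and its first and second derivatives are bounded on any compact subset of $D$, but care is needed near $\partial D$ where only continuity is asserted. To handle this, I would introduce an exhausting sequence of subdomains $D_n \uparrow D$ with $\overline{D_n}\subset D$, say $D_n = \{x\in D : \operatorname{dist}(x,\partial D) > 1/n,\ |x|<n\}$, and let $\tau_n = \inf\{t\ge 0 : Y(t)\notin D_n\}$. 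On $D_n$ the function $u$ is $C^2$ with bounded derivatives, so It\^o's formula is legitimately applicable up to time $\tau_n$ (which is finite a.s.\ and in fact has finite expectation by the same argument giving (\ref{MedH})).

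The key computation is then: under $P^x$ with $x\in D$, for $n$ large enough that $x\in D_n$,
\begin{equation*}
u(Y(t\wedge\tau_n)) = u(x) + \int_0^{t\wedge\tau_n} Lu(Y(s))\,ds + \int_0^{t\wedge\tau_n} \nabla u(Y(s))^T \sigma(Y(s))\,dW(s).
\end{equation*}
Because $Lu = -1$ on $D\supset D_n$, the drift term is simply $-(t\wedge\tau_n)$. The stochastic integral is a true martingale on $[0,t]$ (its integrand is bounded on $\overline{D_n}$), so taking $E^x$ kills it, yielding
\begin{equation*}
E^x[u(Y(t\wedge\tau_n))] = u(x) - E^x[t\wedge\tau_n].
\end{equation*}
Now I would let $t\to\infty$: since $E^x[\tau_n]<\infty$ we have $t\wedge\tau_n \to \tau_n$ a.s.\ and in $L^1$, and since $u$ is bounded on $\overline{D_n}$ dominated convergence gives $E^x[u(Y(\tau_n))] = u(x) - E^x[\tau_n]$.

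The final step is to send $n\to\infty$. Since $D_n\uparrow D$, the stopping times satisfy $\tau_n \uparrow \tau_D$; indeed any path stays in some $D_n$ before exiting $D$ (using that $\overline D$ is compact and the path is continuous), and $\tau_D<\infty$ a.s.\ by (\ref{MedH}). Monotone convergence gives $E^x[\tau_n]\to E^x[\tau_D]$. For the left-hand side, $Y(\tau_n)\to Y(\tau_D)\in\partial D$ by path continuity, and $u\in C(\overline D)$ with $u\equiv 0$ on $\partial D$, so $u(Y(\tau_n))\to u(Y(\tau_D))=0$; since $u$ is bounded on the compact set $\overline D$, dominated convergence yields $E^x[u(Y(\tau_n))]\to 0$. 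Combining, $0 = u(x) - E^x[\tau_D]$, i.e.\ (\ref{VEH}) on $D$; the boundary case $x\in\partial D$ is immediate since then $\tau_D=0$ and $u(x)=0$.

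The main obstacle is the boundary regularity gap: $u$ is only continuous, not $C^2$, up to $\partial D$, so It\^o's formula cannot be applied directly on all of $D$, and the localization via $D_n$ together with the convergence $u(Y(\tau_n))\to 0$ is what bridges this. A secondary technical point worth stating carefully is the justification that the stochastic integral is a genuine martingale (not merely a local one) on each $D_n$ — this follows because $\nabla u$ and $\sigma$ are bounded on $\overline{D_n}$, so the integrand has bounded second moment — and the verification that $\tau_n\uparrow\tau_D$ uses only continuity of paths and boundedness of $D$. One should also recall that possible explosion of $Y$ in finite time is irrelevant here, since on the event of interest the path remains in the bounded set $\overline D$ up to time $\tau_D$.
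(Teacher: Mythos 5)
Your proposal is correct and follows essentially the same route as the paper's proof: localization by stopping times $\tau_n$ that keep the process away from $\partial D$, It\^o's formula with $Lu=-1$, taking expectations, and passing to the limit via dominated convergence together with the boundary condition $u=0$ on $\partial D$. You merely supply more of the technical justifications (why the stochastic integral is a true martingale on each $D_n$, why $\tau_n\uparrow\tau_D$) that the paper leaves implicit.
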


\begin{proof} 
From (\ref{MedH}), we know that $\tau_{D} < \infty$ almost surely. Let $\tau_{n} := \inf {t : \text{dist}(Y(t), \partial D) < 1/n}$, where $\text{dist}(\cdot, \cdot)$ is the distance between sets. Let $u$ be the unique solution to (\ref{PDir}). By Itô’s formula, 
\begin{eqnarray*} 
u(Y(t \wedge \tau_{n})) &=& u(Y(0)) + \text{martingale} + \int_{0}^{t \wedge \tau_{n}} (Lu)(Y(s)) , ds \\
 &=& u(x) + \text{martingale} - (t \wedge \tau_{n}). 
\end{eqnarray*} 
Taking the expected value, we obtain 
$$u(x) = E^{x}[u(Y(t \wedge \tau_{n}))] + E^{x}[(t \wedge \tau_{n})].$$ 
Letting $\tau_{n} \uparrow \tau_{D}$ and $t \uparrow \infty$, we arrive at (\ref{VEH}) using the Dominated Convergence Theorem and the boundary condition of $u$. 
\end{proof}

Thus, we see that the expected exit time of the process $Y$ from the domain $D$ is determined by an elliptic Partial Differential Equation. We will now show that the distribution of this exit time depends on the solution to a parabolic Partial Differential Equation.

We know that $\tau_{D} < \infty$ almost surely, meaning that if the process $Y$ starts inside $D$, almost surely all trajectories will exit the domain in finite time. We will now compute its distribution, but first, it is necessary to introduce the partial differential operator \begin{equation*} 
\overline{L} v := L v - \frac{\partial v}{\partial t}, \quad v \in C^{2}(D \times (0, \infty)), \end{equation*} 
where $D \subset \mathbb{R}^{3}$ is as before, i.e., a bounded domain. The following result is known as the Initial Value Problem.

\begin{theorem} \label{THExP} 
Under the hypotheses of Theorem \ref{THExE}, there exists a unique function $v$ of class $C(\overline{D} \times (0, \infty)) \cap C^{2}(D \times (0, \infty))$ such that 
\begin{equation}\label{PVI} 
\left\lbrace 
\begin{tabular}{ l l } 
$\overline{L} v(x,t) = 0$, & $x \in D, \ t > 0$, \\ 
$v(x,t) = 0$, & $x \in \partial D, \ t \geq 0$, \\ 
$v(x,t) = 1$, & $x \in D, \ t = 0$. 
\end{tabular} \right. 
\end{equation} 
\end{theorem}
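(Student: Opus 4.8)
The plan is to prove Theorem \ref{THExP} by mirroring the proof of Theorem \ref{THExE}, replacing the elliptic existence and uniqueness theory by its parabolic counterpart. First I would record that the estimate (\ref{posdef}) says precisely that $L$ is uniformly elliptic on $D$, so that $\overline{L}=L-\partial/\partial t$ is uniformly parabolic on the cylinder $Q:=D\times(0,\infty)$; moreover, as already remarked, the smoothness of the $f_{i},g_{i}$ together with the boundedness of $D$ makes the coefficients $a_{i,j}$ and $b_{i}$ Lipschitz (in particular H\"older) continuous on $\overline{D}$, and they are independent of $t$. Since $L$ has no zeroth order term, constants are $\overline{L}$-harmonic. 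The data are bounded and, away from the parabolic corner $\partial D\times\{0\}$, continuous (value $0$ laterally, value $1$ initially).

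Next I would establish \emph{existence}. Every point of $\partial D$ satisfies the exterior sphere condition, so the ball used as an elliptic barrier in Theorem \ref{THExE} can be turned, by the usual modification, into a local parabolic barrier at each lateral boundary point $(x_{0},t_{0})\in\partial D\times(0,\infty)$. This regularity of the lateral boundary, combined with uniform parabolicity and the H\"older continuity of the coefficients, is exactly what the classical theory of the first initial–boundary value problem for parabolic equations requires; see, e.g., Friedman, \emph{Partial Differential Equations of Parabolic Type}, Ch.~3, or the parabolic parts of \cite{friedman2008partial} and \cite{gilbarg1977elliptic}. This yields a function $v\in C^{2}(Q)$ with $\overline{L}v=0$ in $Q$, continuous on $\overline{D}\times(0,\infty)$, vanishing on $\partial D\times(0,\infty)$ (by the barriers), and satisfying $v(x,t)\to1$ as $t\downarrow0$ for every $x\in D$, uniformly on compact subsets of $D$ — which is the sense in which the condition $v(\cdot,0)=1$ is attained. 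Equivalently, and in the spirit of Section~\ref{SecPDE}, one checks that $v(x,t):=P^{x}[\tau_{D}>t]$ is such a solution, using interior parabolic regularity for smoothness, the barriers for the lateral behaviour, and $P^{x}[\tau_{D}>0]=1$ (for $x\in D$) for the initial behaviour.

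For \emph{uniqueness}, let $v_{1},v_{2}$ be two solutions in the stated class and set $w:=v_{1}-v_{2}$, so $\overline{L}w=0$ in $Q$, $w=0$ on $\partial D\times(0,\infty)$, and $w(x,t)\to0$ as $t\downarrow0$ for each $x\in D$. Since $\overline{L}$ is uniformly parabolic with bounded coefficients and no zeroth order term and $D$ is bounded, the weak parabolic maximum principle, applied on the cylinders $D\times(\delta,T)$ and passing to the limits $\delta\downarrow0$ and $T\uparrow\infty$ (or, alternatively, a standard energy estimate together with Gr\"onwall's inequality), forces $w\equiv0$; see \cite{friedman2008partial}.

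The main obstacle is the incompatibility between the lateral datum ($v=0$ on $\partial D$) and the initial datum ($v=1$ on $D$) at the corner $\partial D\times\{0\}$: no solution can be continuous there. This is why the theorem only claims continuity on $\overline{D}\times(0,\infty)$, why the initial condition must be read as a limit from the interior, and why one is forced onto the barrier/maximum-principle route rather than any method (semigroup calculus, separation of variables) that presupposes compatible data; it is also the delicate point to handle carefully in the uniqueness argument near the corner. The remaining ingredients — uniform parabolicity, H\"older regularity of the coefficients, and construction of the lateral barriers from the exterior sphere condition — are routine once the elliptic case (Theorem \ref{THExE}) is in hand.
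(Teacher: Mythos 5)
Your argument follows the same route as the paper's proof: deduce uniform parabolicity of $\overline{L}$ from (\ref{posdef}), note the H\"older continuity of the coefficients $a_{i,j}$ and $b_{i}$, obtain lateral barriers at each point of $\partial D\times(0,\infty)$ from the exterior sphere condition, and then invoke the classical existence--uniqueness theory for the first initial--boundary value problem in Friedman. The additional material on the corner incompatibility and the explicit maximum-principle uniqueness argument is a sound elaboration of what the cited theorems already provide, not a different method.
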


\begin{proof} 
From (\ref{posdef}), it follows that the operator $\overline{L}$ is uniformly parabolic in $D \times (0, \infty)$. Furthermore, the coefficients $a_{i,i}$ and $b_{i}$ are uniformly H\"older continuous on $\overline{D} \times [0, \infty)$. Since each point of $\partial D$ satisfies the exterior sphere condition, each point of $\partial D \times (0, \infty)$ has a barrier. Thus, there exists a unique real-valued function $v$ that satisfies (\ref{PVI}); see Theorem 7 in Section 3, Chapter 3 of \cite{friedman2008partial} or Theorem 3.6 in Section 3, Chapter 6 of \cite{friedman1975stochastic}. 
\end{proof}

This result allows us to determine the distribution of $\tau_{D}$. 

\begin{theorem} \label{TP} 
Assuming the hypotheses of Theorem \ref{THExE}, let $v$ be the solution to the Initial Value Problem (\ref{PVI}), then 
\begin{equation} 
v(x,t)=P^{x}[\tau_{D} > t], \quad \mbox{for all} \quad x\in \overline{D} \ \mbox{and} \ t>0. \label{DistEx} 
\end{equation} 
\end{theorem}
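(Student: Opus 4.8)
\section*{Proof proposal for Theorem \ref{TP}}

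The plan is to argue as in the proof of Theorem~\ref{TE}, but now apply It\^o's formula to the space--time function $v$ and exploit that $\overline{L}v=0$. Fix $t>0$, and take first $x\in D$ (the boundary case $x\in\partial D$ is disposed of at the end). Working under $P^{x}$, consider the process $s\mapsto v(Y(s),t-s)$ on $[0,t]$. Writing It\^o's rule for $(s,y)\mapsto v(y,t-s)$, the finite--variation part of $d\,v(Y(s),t-s)$ equals $\big(Lv-\partial_{t}v\big)(Y(s),t-s)\,ds=(\overline{L}v)(Y(s),t-s)\,ds=0$ as long as $(Y(s),t-s)\in D\times(0,\infty)$; hence on that region $v(Y(s),t-s)$ is a local martingale.

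To turn this into an honest identity I localize exactly as in Theorem~\ref{TE}. For $n$ large enough that $t-1/n>0$ and $\operatorname{dist}(x,\partial D)>1/n$, set
\[
\sigma_{n}:=\Big(\inf\{s\ge0:\operatorname{dist}(Y(s),\partial D)<1/n\}\Big)\wedge\big(t-\tfrac1n\big).
\]
On $[0,\sigma_{n}]$ the pair $(Y(s),t-s)$ stays in the compact set $\{\operatorname{dist}(\cdot,\partial D)\ge1/n\}\times[1/n,t]\subset D\times(0,\infty)$, on which $\nabla v$ and $\sigma$ are bounded; thus $v(Y(\cdot\wedge\sigma_{n}),t-\cdot\wedge\sigma_{n})$ is a bounded (genuine) martingale. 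Moreover the parabolic maximum principle applied to~(\ref{PVI}) gives $0\le v\le1$ on $\overline{D}\times[0,\infty)$. Equating expectations of this martingale at $s=0$ and $s=t$ yields
\[
v(x,t)=E^{x}\big[v(Y(\sigma_{n}),t-\sigma_{n})\big]\qquad\text{for all large }n.
\]

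It remains to let $n\to\infty$ and evaluate the right--hand side pathwise. On $\{\tau_{D}>t\}$ the trajectory $Y([0,t])$ is a compact subset of $D$, hence at distance $\delta>0$ from $\partial D$; once $1/n<\delta$ we have $\sigma_{n}=t-1/n$, so $v(Y(\sigma_{n}),t-\sigma_{n})=v(Y(t-1/n),1/n)\to v(Y(t),0)=1$, using that the solution of~(\ref{PVI}) attains its initial value $1$ continuously at interior points of $D$. On $\{\tau_{D}<t\}$ one checks that $\inf\{s\ge0:\operatorname{dist}(Y(s),\partial D)<1/n\}\uparrow\tau_{D}$ (path--continuity), so for $n$ large $\sigma_{n}=\inf\{s\ge0:\operatorname{dist}(Y(s),\partial D)<1/n\}\uparrow\tau_{D}$, giving $Y(\sigma_{n})\to Y(\tau_{D})\in\partial D$ while $t-\sigma_{n}\to t-\tau_{D}>0$, whence $v(Y(\sigma_{n}),t-\sigma_{n})\to v(Y(\tau_{D}),t-\tau_{D})=0$ by the boundary condition in~(\ref{PVI}). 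Finally $P^{x}[\tau_{D}=t]=0$: path--continuity forces $\{\tau_{D}=t\}\subset\{Y(t)\in\partial D\}$, and uniform ellipticity makes the law of $Y(t)$ under $P^{x}$ absolutely continuous with $\partial D$ Lebesgue--null (equivalently, $t\mapsto P^{x}[\tau_{D}>t]$ is continuous, see \cite{karatzas1991brownian}). Dominated convergence (domination by $1$) then gives
\[
v(x,t)=\lim_{n\to\infty}E^{x}\big[v(Y(\sigma_{n}),t-\sigma_{n})\big]=P^{x}[\tau_{D}>t].
\]
For $x\in\partial D$, uniform ellipticity makes every boundary point regular, so $\tau_{D}=0$ $P^{x}$--a.s.\ and both sides of~(\ref{DistEx}) vanish for every $t>0$; this establishes~(\ref{DistEx}) on all of $\overline{D}$.

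The step I expect to be the main obstacle is the limit on $\{\tau_{D}>t\}$: it needs the function furnished by Theorem~\ref{THExP} to realize its initial datum continuously on the interior slice $D\times\{0\}$, i.e.\ $v(y,s)\to1$ as $s\downarrow0$ locally uniformly on compacts of $D$, and, hand in hand with it, the fact that $\tau_{D}$ puts no mass on the fixed time $t$. The It\^o computation and the localization are otherwise routine and mirror the proof of Theorem~\ref{TE}.
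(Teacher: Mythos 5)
Your proof is correct and follows essentially the same route as the paper's: It\^o's formula applied to $s\mapsto v(Y(s),t-s)$, localization by stopping times that keep the process away from $\partial D$ (and, in your version, also away from the terminal time $t$), and evaluation of the limit via the boundary and initial conditions of (\ref{PVI}). The only substantive difference is in the fine print: where the paper retains the event $\{\tau_{D}=t\}$ and kills the corresponding term with the boundary condition at $t=0$, you discard it as a $P^{x}$-null event, and your explicit attention to the continuity of $v$ on the interior slice $D\times\{0\}$ and to the case $x\in\partial D$ makes the limit passage more careful than the paper's, but the underlying argument is identical.
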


\begin{proof} 
Fix $t>0$ and let $\tau_{n}$ be the stopping time defined in the proof of Theorem \ref{THExE}. Let $v$ be the unique solution to (\ref{PVI}). By Itô's formula on $\mathbb{R}^{d}\times [0,t)$, \begin{eqnarray*} 
v(Y(s\wedge \tau_{n}),t-(s\wedge \tau_{n}))&=& v(Y(0),t) + \mbox{martingale} +\int_{0}^{t\wedge \tau_{n}} (Lv)(Y(r),t-r)dr \\ 
&&+ \int_{0}^{t\wedge \tau_{n}}\left( -\frac{\partial v}{\partial r}\right) (Y(r),t-r)dr. \end{eqnarray*} 
Taking expectations and letting $\tau_{n}\uparrow \tau_{D}$, the Dominated Convergence Theorem implies $$v(Y(0),t)=E[v(Y(s\wedge \tau_{D}),t-(s\wedge \tau_{D}))].$$ Letting $s\uparrow t$, it follows that 
\begin{eqnarray*} 
v(x,t)&=&E^{x}[v(Y(t\wedge \tau_{D}),t-(t\wedge \tau_{D}))]\\ 
&=&E^{x}[v(Y(t),0);\tau_{D} > t]+ E^{x}[v(Y(\tau_{D}),0);\tau_{D} = t] + E^{x}[v(Y(\tau_{D}),t-\tau_{D});\tau_{D} < t]. 
\end{eqnarray*} 
Thus, the equality (\ref{DistEx}) follows from the boundary conditions given in (\ref{PVI}). 
\end{proof}

\section{Numerical Aspect of Solving the PDEs (\ref{PDir}) and (\ref{PVI})} \label{SecNum}

To use FreeFEM, we need the variational form of equation (\ref{PDir}) (see \cite{evans2022partial}). For this, we take a test function $w$ and multiply both sides of equation (\ref{PDir}). Then, we integrate over the entire domain 
$$ \frac{1}{2} \sum_{i=1}^{3} \sum_{j=1}^{3} \int_{D} w\, a_{i,j} \frac{\partial^{2} u}{\partial x_{i} \partial x_{j}} dx+ \sum_{i=1}^{3} \int_{D} w\, b_{i}\frac{\partial u}{\partial x_{i}} dx = -\int_{D} w dx.$$ 
Using the integration by parts formula (see page 714 in \cite{evans2022partial}) we get \begin{eqnarray*} 
\int_{D} w\, a_{i,j} \frac{\partial^{2} u}{\partial x_{i} \partial x_{j}} dx
&=&-\int_{D} a_{i,j}\frac{\partial u}{\partial x_{i}} \, \frac{\partial w}{\partial x_{j}} dx- \int_{D} w \frac{\partial u}{\partial x_{i}} \, \frac{\partial a_{i,j}}{\partial x_{j}} dx\\ 
&& + \int_{\partial D} w\, a_{i,j} \frac{\partial u}{\partial x_{i}} \nu^{j} dS, 
\end{eqnarray*} 
where $\nu=(\nu^{1},...,\nu^{n})^{T}$ is the unit outward pointing vector and $dS$ is the surface measure. Due to the Dirichlet boundary conditions, we choose $w$ such that $w=0$ on $\partial D$. Consequently, we arrive at 
$$\int_{D}G(u,w)dx=\int_{D}F(w)dx,$$ 
where $G$ is the bilinear form 
\begin{equation} 
G(u,w):= \frac{1}{2} \sum_{i=1}^{3} \sum_{j=1}^{3} a_{i,j}\frac{\partial u}{\partial x_{i}} \, \frac{\partial w}{\partial x_{j}} + \frac{1}{2} \sum_{i=1}^{3}\sum_{j=1}^{3} w \frac{\partial u}{\partial x_{i}} \, \frac{\partial a_{i,j}}{\partial x_{j}} - \sum_{i=1}^{3} b_{i}\, w\frac{\partial u}{\partial x_{i}} \label{BFE} 
\end{equation} and $F(w)$ is the linear one 
$$ F(v):= w.$$
On the other hand, in what follows we will assume that the domain $D$ is of the form 
$$D=(a_{1},a_{2})\times (b_{1},b_{2}) \quad \text{or} \quad D=(a_{1},a_{2})\times (b_{1},b_{2}) \times (c_{1},c_{2}).
$$ Consequently, the set $\partial D$ satisfies the exterior sphere condition. It should be noted that FreeFEM supports much more general domains; we have chosen this one solely to demonstrate our results.

The algorithm in FreeFEM to solve the PDE (\ref{PDir}) consists of three main parts (see \cite{MR3043640}): 
\begin{description} 
\item[I.] Definition of the Domain $D$. 
\item[II.] Definition of the functions $b_{i}, a_{i,i}$ and $\partial a_{i,i}/\partial x_{i}$. \item[III.] Definition of the problem \\ 
 $\mathtt{int3d(Th)}\left(G(u,w)\right)$; \ bilinear form \\ 
 - $\mathtt{int3d(Th)}$($F(w)$); \ linear form \\ 
 + $\mathtt{on}$(1, 2, 3, 4, 5, 6, $u=g$); \ boundary condition. 
\end{description} 
In FreeFEM code, the integral of the bilinear form and the linear form must always be written separately. For greater utility of our results, we have shared the code of one of the examples we address in the next section in the Appendix.

Now let us address the case of the parabolic PDE (\ref{PVI}) (see Section 5.5 of \cite{MR3043640}). Take $T>0$ and $\eta\in (0,T)$. For $\eta$ close to $0$ we have that 
$$\frac{\partial v(x,t)}{\partial t} \approx \frac{v(x,t)-v(x,t-\eta)}{\eta}.$$ 
Therefore, if we set $u^{m}(x):=v(t,m\eta)$, $m=0,1,...,[T/\eta]$, where $[T/\eta]$ is the integer part of $T/\eta$, we get 
$$\frac{\partial v(x,m \eta)}{\partial t} \approx \frac{u^{m+1}(x)-u^{m}(x)}{\eta}.$$ 
Hence, the time discretization of equation (\ref{PVI}) is as follows, 
\begin{equation}\label{disPVI} 
\left\lbrace \begin{tabular}{ r c l l } 
$\frac{u^{m+1}(x)-u^{m}(x)}{\eta}$ &=& $L u^{m+1}$, & $x \in D$, \\ 
$u^{m+1}(x)$ &=& $0$, & $x \in \partial D$, \\ 
$u^{0}(x)$ &=& $1$, & $x \in D$. 
\end{tabular} \right.
\end{equation} 
We proceed as before, multiplying (\ref{disPVI}) by a test function $w$, such that $w=0$ on $\partial D$, and integrating over the entire domain to obtain 
$$\int_{D}G(u^{m+1},w)dx=\int_{D}F(w)dx,$$ 
where 
$$G(u^{m+1},w):= \frac{\eta}{2}\sum_{i=1}^{3} \sum_{j=1}^{3} a_{i,j} \frac{\partial u^{m+1}}{\partial x_{i}} \frac{\partial w}{\partial x_{j}} + \frac{\eta}{2}\sum_{i=1}^{3} \sum_{j=1}^{3} w \frac{\partial u^{m+1}}{\partial x_{i}} \frac{\partial a_{i,j}}{\partial x_{j}} - \eta \sum_{i=1}^{3} b_{i} w \frac{\partial u^{m+1}}{\partial x_{i}}+ u^{m+1} w $$ 
and 
$$F(w):= u^{m} w.$$

In this way, at each step we solve an Elliptic Equation, thus obtaining an approximate solution to the Parabolic Equation. Therefore, in addition to the previous steps of the FreeFEM scheme, we must add a fourth step. Namely, 
\begin{description} 
\item[IV.] Iterative loop. Solve the Elliptic PDE, applying steps ${\bf I}$-${\bf III}$, for $m=0,1,...,[T/\eta]$. 
\end{description} 
In the next section, we will present some examples and detail the FreeFEM code for only one of them, as the others are handled similarly.

\section{Examples} \label{SecEx}

To illustrate the application of our theoretical and numerical results, we consider four deterministic models and introduce a stochastic component into each.  Two of these models are two‐dimensional, and the other two are three‐dimensional.  These examples are drawn from the literature; our aim is not to evaluate their formulation, but to analyze the exit‐time functionals of the corresponding stochastic models.  Accordingly, for each reference, we specify the model under study and clearly define the meaning of every parameter.  This emphasizes how the results developed in this work can be applied in practice.  

\begin{description}

\item[A rumor model:] The rumor model has gained increasing importance, particularly due to its various real-world implications. For instance, it may represent the preference for a political party (Democrat or Republican) or the potential devaluation of a certain currency. Moreover, the integration of social networks into society has heightened the interest in systematically studying this phenomenon. In our case, we assume that the rumor is modeled by the following system of differential equations (see \cite{zhang2024stochastic}):
\begin{eqnarray*}
\frac{dS}{dt}&=& \Lambda+ \alpha I^{2}-\beta SI - \mu S ,\\
\frac{dI}{dt}&=&\beta SI- (\mu + \eta)I - \alpha I^{2}.
\end{eqnarray*}
We assume that the parameters $\Lambda$, $\alpha$, $\beta$, $\mu$, and $\eta$ are non-negative real numbers, whose meanings, which are not essential for our purposes, can be consulted in \cite{zhao2016dynamic}.

Let $Y=(S,I)^{T}$, where $S$ represents the percentage of individuals susceptible to the rumor and $I$ represents the percentage of individuals influenced by the rumor. As in Section \ref{SecMod}, we have that over a small time interval $\Delta t$, the vector $\Delta Y$ can take only the following five states:
\begin{gather*}
(-1,0)^{T},\ (0,-1)^{T}, \ (1,0)^{T}, \ (-1,1)^{T}, \ (0,0)^{T}.
\end{gather*}
Table \ref{TaRumor} provides the probabilities for each of these states.
\begin{table}[H]
\centering
\begin{tabular}{ l | l }
Change & Probability \\
 \hline \hline
 $\Delta Y^{1}= (-1,0)^{T}$ &  $p_{1}:= f_{1}(S) \Delta t$\\
 $\Delta Y^{2}=(0,-1)^{T}$ &  $p_{2}:= f_{2}(I) \Delta t$\\
 $\Delta Y^{3}=(1,0)^{T}$ &  $p_{3}:= g_{1}(I) \Delta t$\\
 $\Delta Y^{4}=(-1,1)^{T}$ &  $p_{4}:= g_{2}(S,I) \Delta t$\\
 $\Delta Y^{5}=(0,0)^{T}$ &  $p_{5}:= 1- \sum_{i=1}^{4} p_{i}$     
\end{tabular}
\caption{Probabilities for changes in the rumor model.} \label{TaRumor} 
\end{table}
In this case, the functions $f_{i}$ and $g_{i}$ are given by:
\begin{gather*} 
f_{1}(S)=\mu S, \quad f_{2}(I)= (\mu + \eta)I + \alpha I^{2}, \quad g_{1}(I)= \Lambda + \alpha I^{2},  \quad g_{2}(S,I)= \beta SI.
\end{gather*}
With this information, the stochastic system of interest is:
\begin{equation*}\label{EDESIS}
\left\lbrace 
\begin{tabular}{ l l  l}
 $dS(t)$ & $=$ &  $(\Lambda+ \alpha I^{2}-\beta SI - \mu S) dt + \sigma_{1,1}(S,I)dW_{1}(t)+\sigma_{1,2}(S,I)dW_{2}(t)$, \\ 
 $dI(t)$ & $=$ &  $(\beta SI- (\mu + \eta)I - \alpha I^{2}) dt + \sigma_{2,1}(S,I)dW_{1}(t)+\sigma_{2,2}(S,I)dW_{2}(t)$,
\end{tabular} \right.
\end{equation*}
where $(\sigma_{i,j})$ is the square root of:
\begin{eqnarray*}
(a_{i,j})=\left(
\begin{tabular}{ c c }
 $\mu S + \Lambda + \alpha I^{2} + \beta SI$ & $-\beta SI$ \\ 
 $-\beta SI$ &  $ (\mu + \eta)I + \alpha I^{2}+\beta SI$     
\end{tabular} \right).
\end{eqnarray*}

Now, we can compute the elliptic differential operator $L$ and the bilinear form $G$ (see \eqref{GIE} and \eqref{BFE}, respectively) and proceed with the numerical part.

In this case, we take a numerical example from \cite{zhao2016dynamic}. Specifically, the parameter values considered are:
$$\Lambda= 0.5, \quad \mu=0.3, \quad \eta = 0.2, \quad \alpha=0.1, \quad \beta=0.4.$$ 
Let us assume $S(0) = 0.8$, $I(0) = 0.2$, and $D=(0.7,0.9)\times (0.1,0.3)$. We use FreeFEM to solve equations \eqref{PDir}, \eqref{PVI}, and obtain the following results:
\begin{figure}[]
    \begin{subfigure}[bt]{0.55\textwidth}
    \centering
       \includegraphics[scale=0.16]{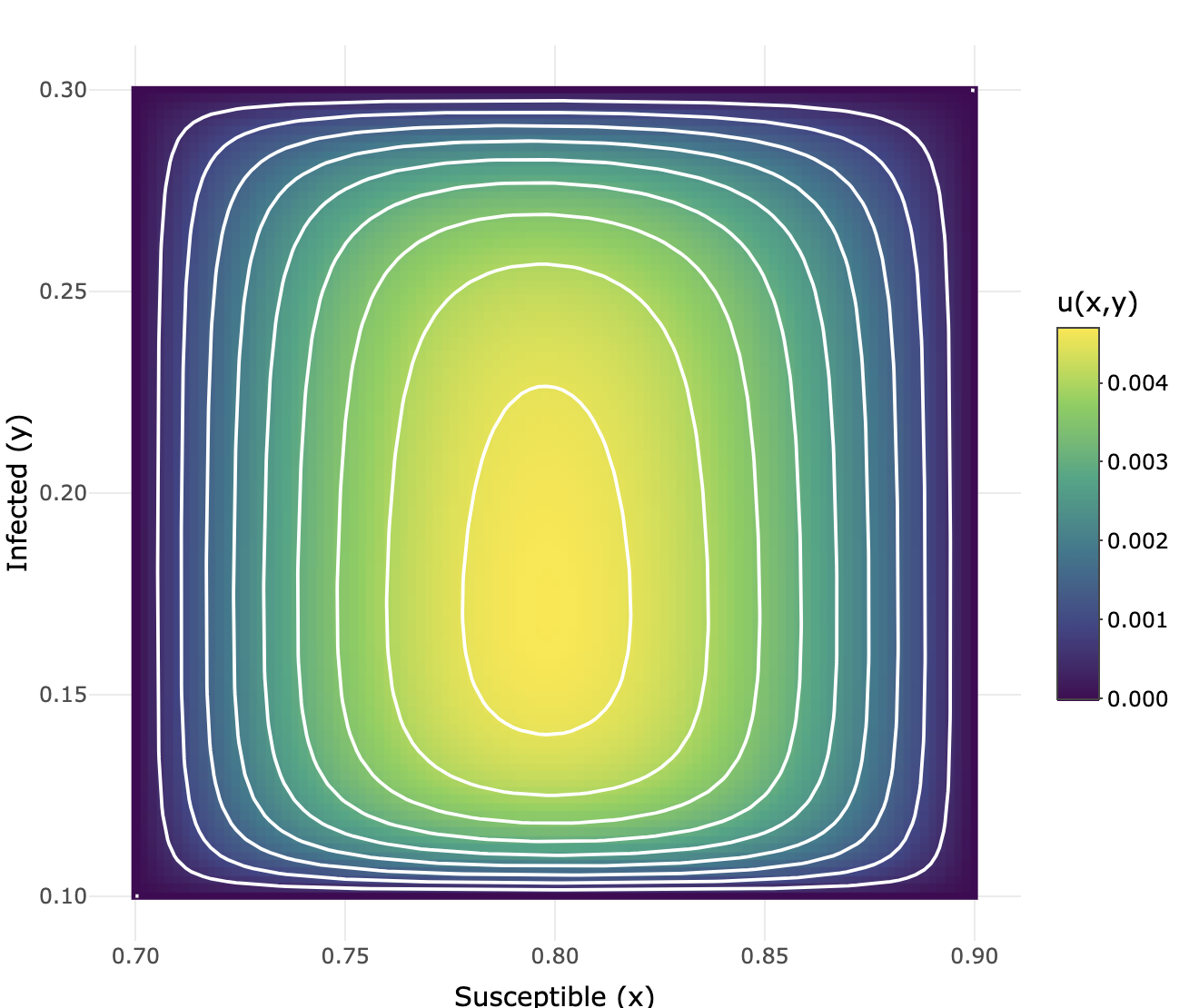}
       \caption{Level curves of $(x,y) \to E^{(x,y)}[\tau_{D}]$.}  
    \end{subfigure}    
    \begin{subfigure}[bt]{0.5\textwidth}
    \hspace{-0.3cm}
       \includegraphics[scale=0.16]{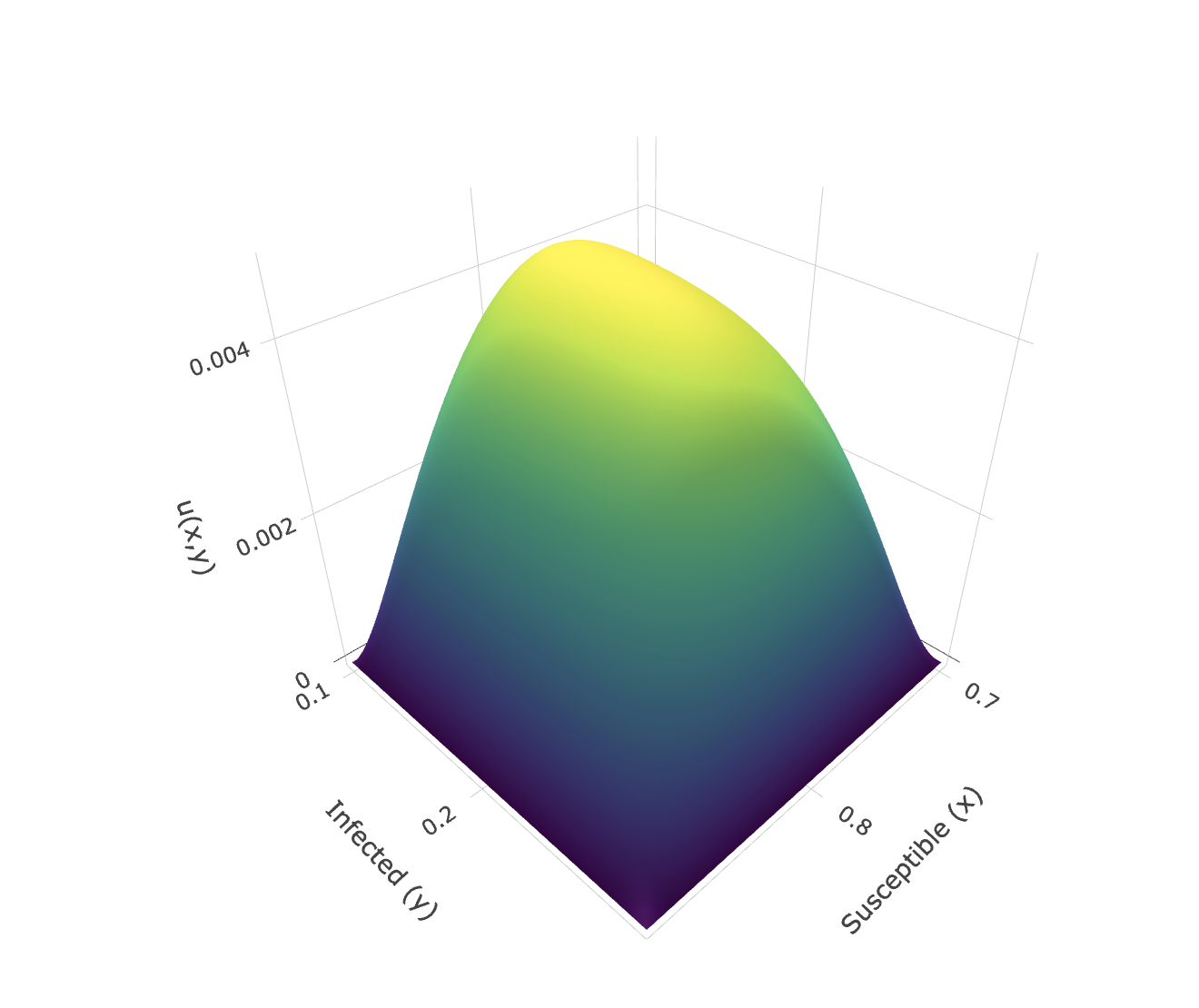}
       \caption{Graph of $(x,y) \to E^{(x,y)}[\tau_{D}]$.}  
    \end{subfigure}

     \begin{subfigure}[bt]{.55\textwidth}
    \hspace{-0.5cm}
       \includegraphics[scale=0.18]{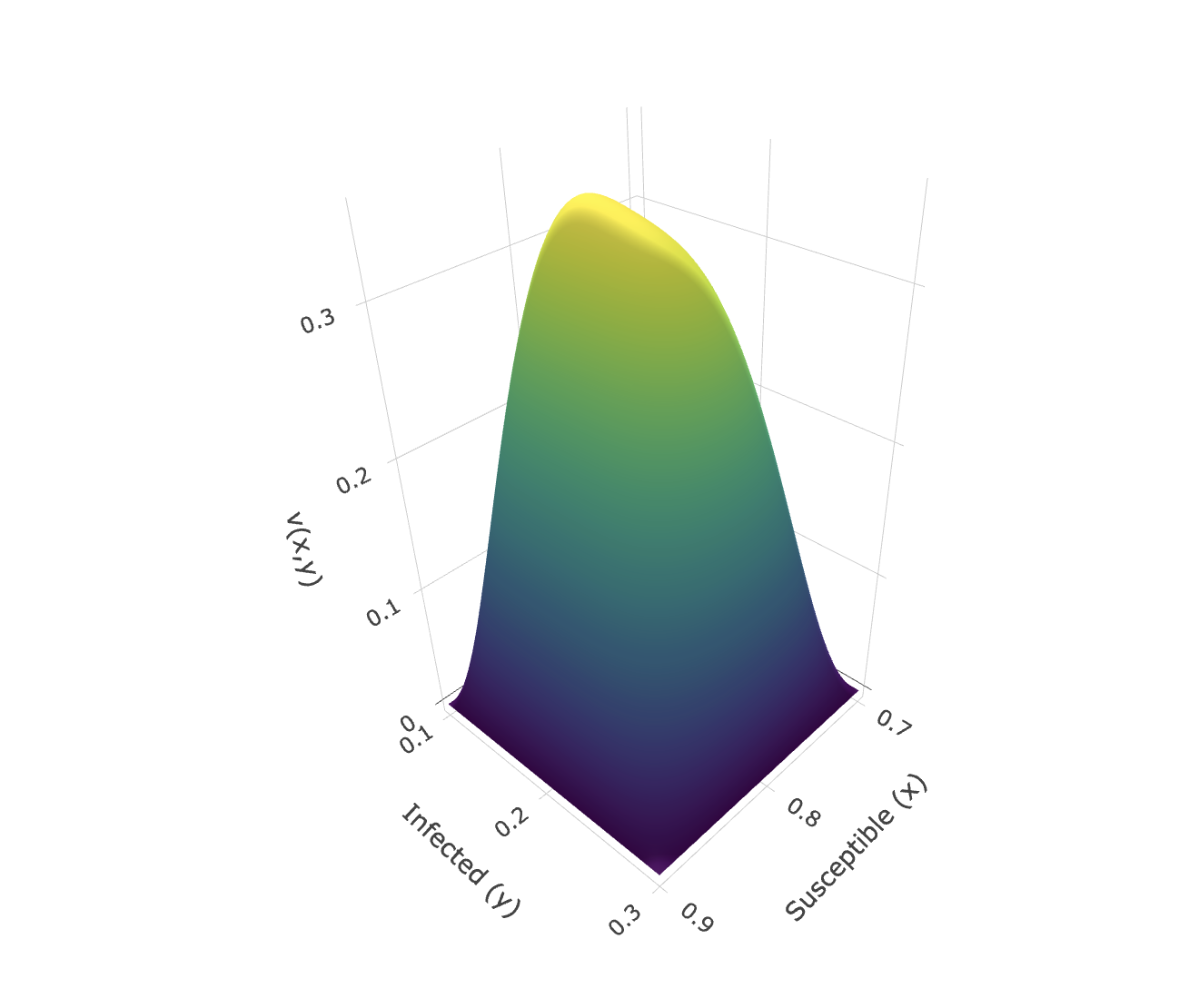}
       \caption{Graph of $(x,y) \to P^{(x,y)}[\tau_{D}>0.005]$.}  
    \end{subfigure} 
    \begin{subfigure}[bt]{0.4\textwidth}
    \hspace{-1.5cm}
    \vspace{0.6cm}
       \includegraphics[scale=0.16]{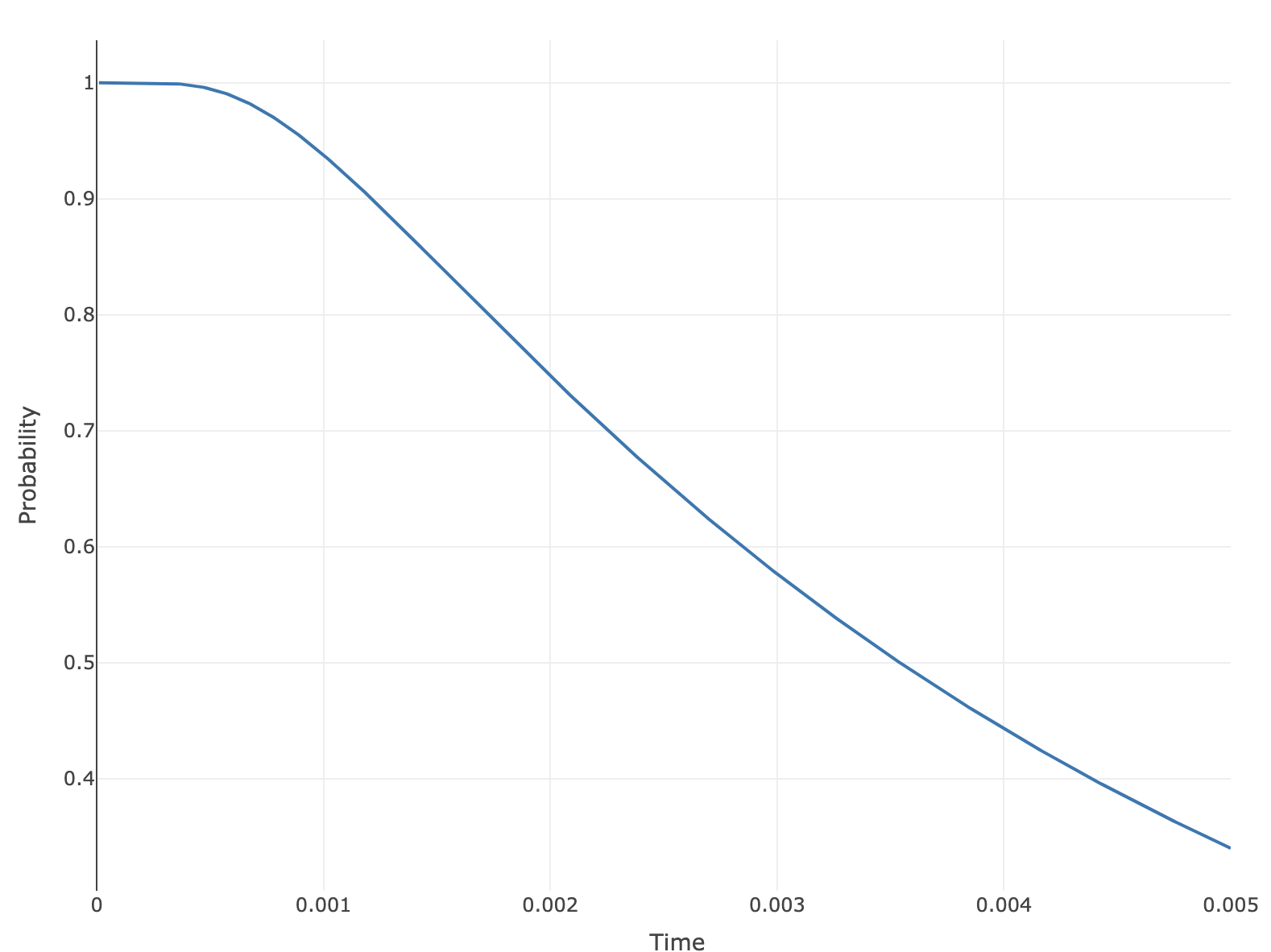}
       \caption{Graph of $t \to P^{(0.8,0.2)}[\tau_{D}>t]$.}  
    \end{subfigure}
   \caption{The expectation and probability distribution of the rumor exit time for $D=(0.7,0.9)\times (0.1,0.3)$.}  \label{FigRumor}  
\end{figure}

Figure \ref{FigRumor}(a) shows the contour lines of the function $u(x,y)$. From this, we observe that the first contour level is broad, while the subsequent levels shrink rapidly, indicating an accelerated evolution of the rumor. Specifically, the initial point $(0.8,0.2)$ lies on the first level, whereas the point $(0.75,0.25)$ lies on the second level. A small $5\%$ change in the rumor leads to a reduction of four levels. More precisely, from Figure \ref{FigRumor}(b), we find that $u(0.8,0.2) = 4.636 \times 10^{-3}$ and $u(0.75,0.25) = 3.235 \times 10^{-3}$, resulting in a $69\%$ decrease in the average exit time. This means that a slight increase in the rumor causes it to grow rapidly. Furthermore, Figure \ref{FigRumor}(b) also shows that the initial point is the maximum of $u$, as expected due to the symmetry of the domain $D$ and the condition $S + I = 1$.

On the other hand, Figure \ref{FigRumor}(c) reveals that $v(0.8,0.2) = 0.34$, which indicates that the probability of the rumor not exceeding $10\%$ of its initial value within more than $0.005$ time units is $34\%$. Similarly, $v(0.75,0.25) = 0.211$ suggests that the probability of the rumor remaining within the range $(0.1,0.3)$ for more than $0.005$ time units is approximately $21\%$. As before, this underscores the importance of containing the rumor at early stages. A $5\%$ increase in the rumor reduced the probability of staying in the range $(0.1,0.3)$ by $13\%$.

From Figure \ref{FigRumor}, we know that the maximum average value of $\tau_{D}$ is $4.636 \times 10^{-3}$ units, providing insight into the distribution domain of $P^{(0.8,0.2)}[\tau_{D}>t]$. Figure \ref{FigRumor}(d) shows that after $0.005$, the probabilities become very small. Additionally, we noticed that at the beginning of the rumor, its growth is extremely slow; for instance, in the time interval $(0,5.2 \times 10^{-5})$. We also observe that $P^{(0.8,0.2)}[\tau_{D}>0.0025] = 0.657$ and $P^{(0.8,0.2)}[\tau_{D}>0.005] = 0.34$. Thus, doubling the time reduces the probability of the rumor increasing by more than $10\%$ by almost $32\%$. In fact, in the graph we see that, from a certain point, the rumor grows at an accelerated pace, aligning with intuition.
 
\item[A gonorrhea model:] 

Recall that the techniques developed here can be applied to other stochastic models, as we will demonstrate below.  There are several methods for deriving a stochastic model from a deterministic one, see \cite{mao2007stochastic}. One of the most common and natural methods is the technique known as stochastic parameter variation, which is motivated by the uncertainty in measuring some of the parameters involved in the model.

To present another application of our results, let us consider the SIS model introduced in (see \cite{gray2011stochastic})
\begin{eqnarray*}
\frac{dS}{dt}&=& \mu N- \beta SI+\gamma I- \mu S,\\
\frac{dI}{dt}&=&\beta SI- (\mu + \gamma)I.
\end{eqnarray*}
The parameters $N$, $\mu$, and $\gamma$ are non-negative real numbers, while the parameter $\beta$ is a real number. In this model, $S$ and $I$ represent, respectively, the number of susceptible individuals and the number of infected individuals.

Considering the stochastic variation of the parameter $\beta$ in \cite{gray2011stochastic}, the following stochastic system is obtained
\begin{eqnarray*}
dS&=&[\mu N - \beta SI + \gamma I - \mu S]dt- \alpha SI dW_{1},\\
dI&=&[\beta SI - (\mu + \gamma)I]dt+ \alpha SI dW_{2},
\end{eqnarray*}
from which it follows that the dispersion matrix $(a_{i,j})$ is (see \cite{kuo2006stochastic})
\begin{eqnarray*}
(a_{i,j})=\left(
\begin{tabular}{ c c }
 $\alpha^{2} S^{2}I^{2}$ & $0$ \\ 
 $0$ &  $\alpha^{2} S^{2}I^{2}$
\end{tabular} \right).
\end{eqnarray*}

As an application, let us consider the problem of gonorrhea infection among homosexuals. In \cite{gray2011stochastic}, this infection model is studied with the following parameters:  
$$N=10,000, \quad \mu=6.84463\times 10^{-5}, \quad \gamma=0.018182, \quad \beta=2.55504\times 10^{-6}.$$
Moreover, it is assumed that $S(0)=9,000$ and $I(0)=1,000$. An important problem in such models, produced by stochastic perturbation, is to estimate the perturbed parameters, in this case, the dispersion parameter $\alpha$.

Note that the model is very sensitive to the choice of $\alpha$. To illustrate this, let us consider the set $D=(8,500,9,500)\times (500,1500)$. For example, if $\alpha = 1 \times 10^{-4}$, we observe in Figure \ref{FigGhono}(a) that $E^{(9,000,1,000)}[\tau_{D}]=0.2069$. This result is unrealistic, as it implies that, in less than a day, either the $1,500$ infected individuals were exceeded, or $500$ gonorrhea cases were cured.

Using a logistic model to describe the average behavior of gonorrhea, \cite{de2024certain} proposes taking the value $\alpha=1.5\times 10^{-5}$. In this case, according to Figure \ref{FigGhono}(b), $E^{(9,000,1,000)}[\tau_{D}]=9.091$, which better matches the real data presented in \cite{gray2011stochastic}.

Furthermore, in the probability distributions shown in Figures \ref{FigGhono}(c) and \ref{FigGhono}(d), we see that the first distribution is quite unrealistic, as it indicates that the infection is extremely contagious, leaving the region $(8,500,9,500)\times (500,1,500)$ within minutes. In contrast, in the second distribution, for example, there is a probability of about $40\%$ that it takes the infection more than 8 days to exceed $1,500$ infected individuals or for $500$ to recover, which is more reasonable according to the real model,  \cite{gray2011stochastic}.

As we can see, choosing the correct value of the parameter $\alpha$ is crucial for making accurate predictions. Let us assume that $\alpha = 1.5\times 10^{-5}$. With this value, we know that the infection surpasses $1,500$ infected individuals or more than $500$ recover in an average of nine days. Moreover, from Figure \ref{FigGhono}(d), we see that the probability of the infection leaving the interval $(500,1,500)$ in less than ten days is approximately $67\%$. This is significant because it implies that within ten days, it is quite likely that we will know what action to take.
\begin{figure}[]
    \begin{subfigure}[bt]{0.55\textwidth}
    \centering
       \includegraphics[scale=0.2]{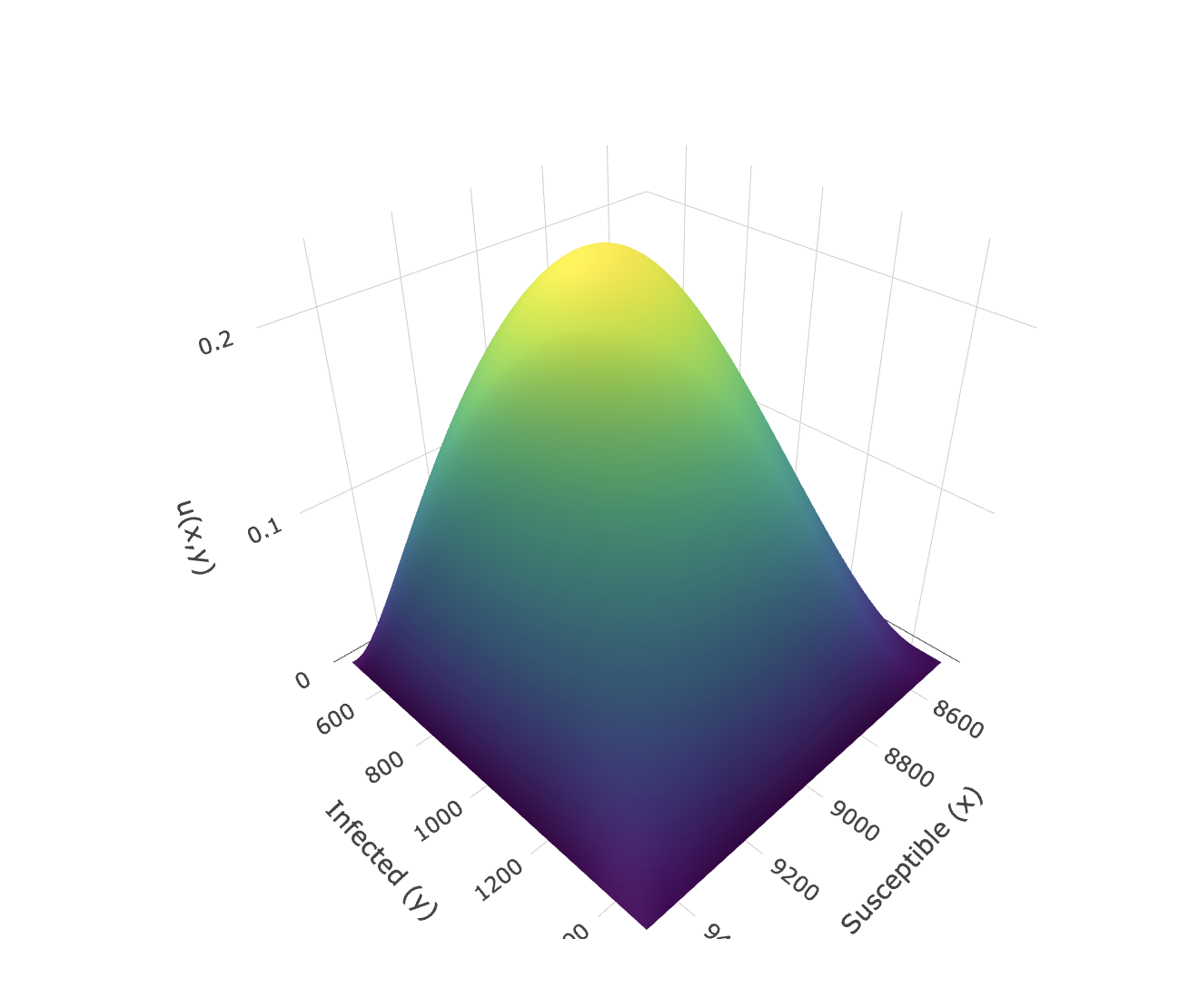}
       \caption{Graph of $x \to E^{x}[\tau_{D}]$, $\alpha=1\times 10^{-4}$.}  
    \end{subfigure}    
    \begin{subfigure}[bt]{0.5\textwidth}
    \hspace{-0.5cm}
       \includegraphics[scale=0.2]{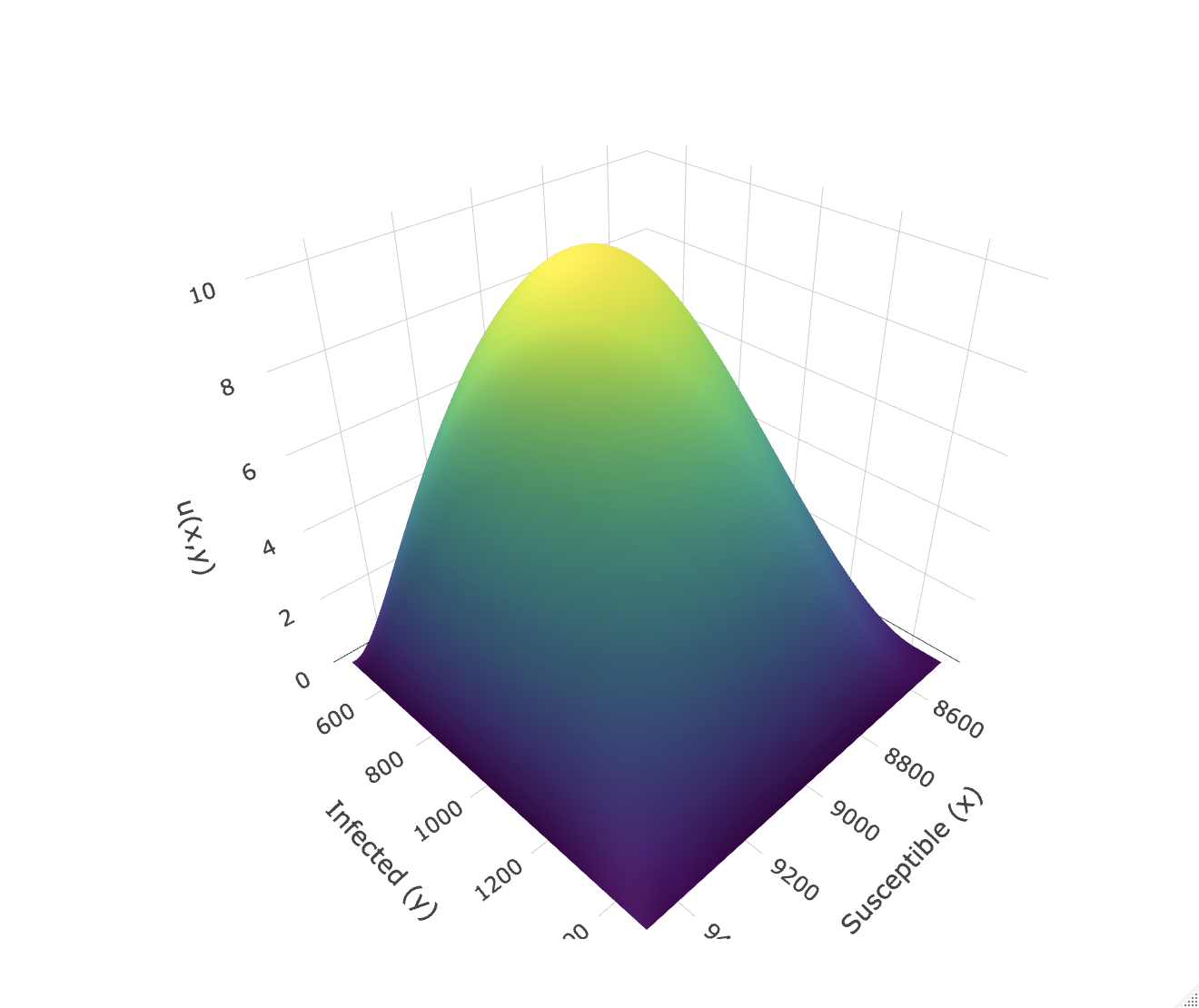}
       \caption{Graph of $x \to E^{x}[\tau_{D}]$, $\alpha=1.5\times 10^{-5}$.}  
    \end{subfigure}

     \begin{subfigure}[bt]{.55\textwidth}
    \hspace{-0.5cm}
       \includegraphics[scale=0.15]{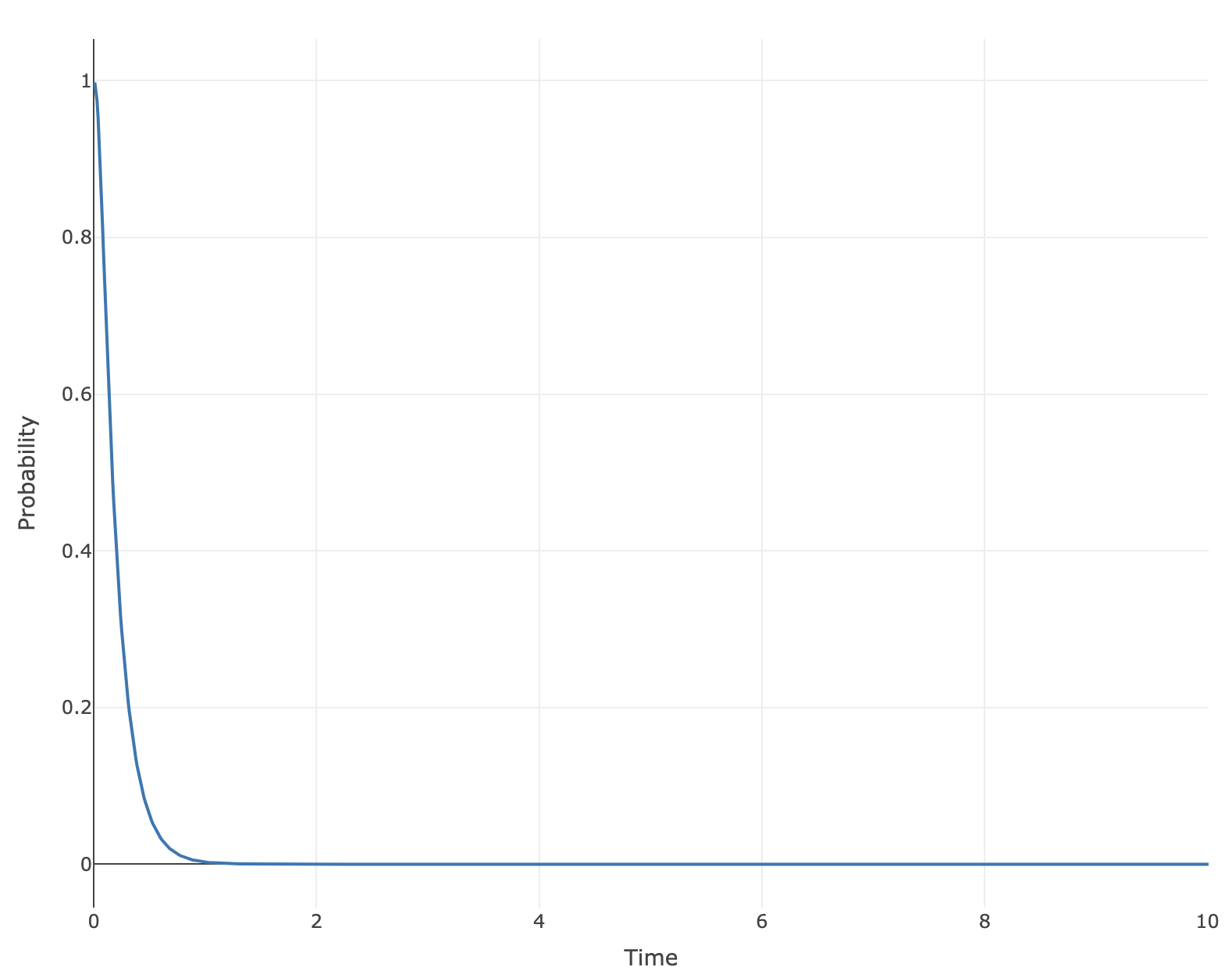}
       \caption{Graph of $t \to P^{(0.8,0.2)}[\tau_{D}>t]$,\\ \centering $\alpha=1\times 10^{-4}$.}  
    \end{subfigure} 
    \begin{subfigure}[bt]{0.4\textwidth}
    \hspace{-1cm}
    \vspace{0.6cm}
       \includegraphics[scale=0.15]{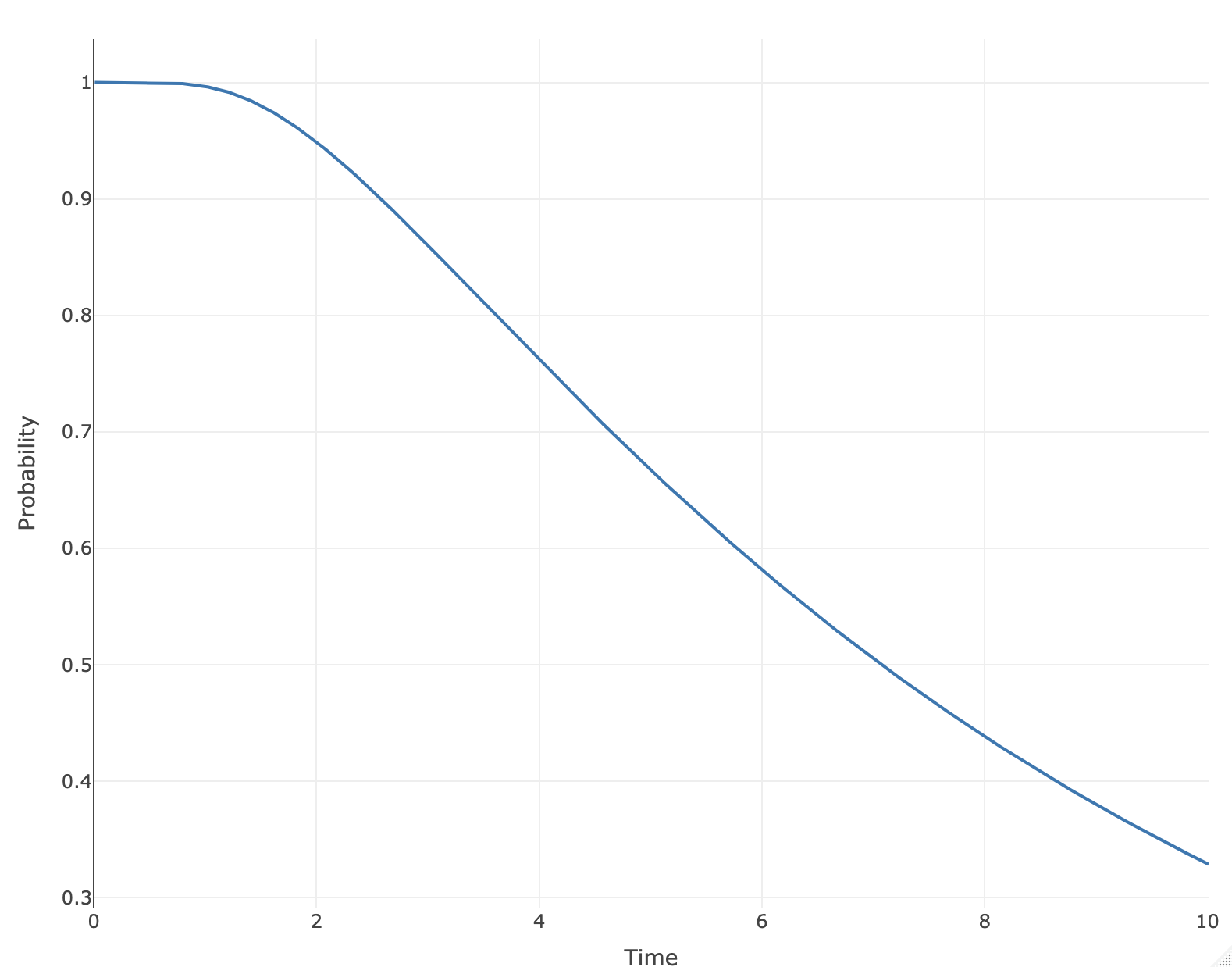}
       \caption{Graph of $t \to P^{(0.8,0.2)}[\tau_{D}>t]$,\\  \centering $\alpha=1.5\times 10^{-5}$.}  
    \end{subfigure}
   \caption{The expectation and probability distribution of the gonorrhea exit time for $D=(8500,9500)\times (500,1500)$.}  \label{FigGhono}  
\end{figure}

\end{description}

Now we will address examples where the domain $D$ is a subset of $\mathbb{R}^{3}$. In this case, it is not possible to graph the functions $(x,y,z) \to u(x,y,z) := E^{(x,y,z)}[\tau_{D}]$ and $(x,y,z) \to v(x,y,z) := P^{(x,y,z)}[\tau_{D} > t_{0}]$. Since we are interested in the expected exit time of a process $Y$, starting at $(x_{0}, y_{0}, z_{0}) \in D$, we will graph the $x_{0}$-section of $u$, that is, the graph of $u_{x_{0}}: D_{x_{0}} \to \mathbb{R}$, where $D_{x_{0}} := {(y,z) : (x_{0},y,z) \in D }$ and $u_{x_{0}}(y,z) := u(x_{0},y,z)$. In this way, we determine $u(x_{0}, y_{0}, z_{0})$. We perform this only for the SIR example, as the method is similar for the model for cancerous tumors. A similar procedure could be applied to $v(x)$. In our case, we will only consider the distribution of the exit time at the point $(x_{0}, y_{0}, z_{0})$, that is, the function $t \to P^{(x_{0},y_{0},z_{0})}[\tau_{D} > t]$.

\begin{description}

\item[The SIR model:] 

Some contagious diseases grant immunity to individuals once they recover. A common model for the spread of such infections is the well-known SIR model. In this case, $S$ represents the proportion of susceptible individuals, $I$ the proportion of infected individuals, and $R$ the proportion of recovered individuals who have acquired immunity to the disease. A classical dynamic for this model is determined by
\begin{eqnarray*}
\frac{dS}{dt}&=&\Lambda -\mu S - \beta SI,\\
\frac{dI}{dt}&=& \beta SI- (\mu+\gamma+\epsilon)I,\\
\frac{dR}{dt}&=&\gamma I - \mu R,
\end{eqnarray*} 
where $\Lambda$, $\mu$, $\beta$, $\gamma$, and $\epsilon$ are non‐negative constants whose specific meanings are explained in detail in \cite{ji2014threshold}; our primary objective here is to determine the exit times of the stochastic model.  

Let $Y=(S,I,R)^{T}$ be the solution of the SIR model. As before, over a small time interval $\Delta t$, the vector $\Delta Y$ takes the following six states
\begin{gather*}
(1,0,0)^{T},\ (-1,0,0)^{T}, \ (-1,1,0)^{T}, \ (0,\mu+\gamma+\epsilon,\gamma)^{T}, \ (0,0,-1), \ (0,0,0)^{T}.
\end{gather*}
Table \ref{TabSIR} shows the transition probabilities for these states.
\begin{table}[H]
\centering
\begin{tabular}{ l | l }
Change & Probability \\ 
 \hline \hline
 $\Delta Y^{1}= (1,0,0)^{T}$ &  $p_{1}:= f_{1}(S) \Delta t$\\
 $\Delta Y^{2}=(-1,0,0)^{T}$ &  $p_{2}:= g_{1}(S) \Delta t$\\
 $\Delta Y^{3}=(-1,1,0)^{T}$ &  $p_{3}:= g_{2}(S,I) \Delta t$\\
 $\Delta Y^{4}=(0,\mu+\gamma+\epsilon,\gamma)^{T}$ &  $p_{4}:= g_{3}(I) \Delta t$\\
  $\Delta Y^{5}=(0,0,-1)^{T}$ &  $p_{5}:= f_{2}(R) \Delta t$\\
 $\Delta Y^{6}=(0,0,0)^{T}$ &  $p_{6}:= 1- \sum_{i=1}^{4} p_{i}$     
\end{tabular}
\caption{Probabilities for changes in the SIR model} \label{TabSIR}
\end{table}
In this case, the functions $f_{i}$ and $g_{i}$ are given by
\begin{gather*} 
f_{1}(S)=\Lambda, \quad g_{1}(S)= \mu S, \quad g_{2}(S,I)= \beta SI,  \quad g_{3}(I)= I, \ f_{2}(R)= \mu R.
\end{gather*}
Thus, the stochastic system modeling the infection is
\begin{equation*}\label{EDESIR}
\left\lbrace 
\begin{tabular}{ l l l}
 $dS(t)$ & $=$ &  $(\Lambda -\mu S - \beta SI) dt +\sum_{i=1}^{3} \sigma_{1,i}(S,I,R) \ dW_{i}(t)$, \\ 
 $dI(t)$ & $=$ &  $(\beta SI- (\mu+\gamma+\epsilon)I) dt + \sum_{i=1}^{3} \sigma_{2,i}(S,I,R) \ dW_{i}(t)$,\\
 $dR(t)$ & $=$ &  $(\gamma I - \mu R) dt + \sum_{i=1}^{3} \sigma_{3,i}(S,I,R) \ dW_{i}(t))$,
\end{tabular} \right.
\end{equation*}
where $(\sigma_{i,j}(S,I,R))$ is the square root of 
\begin{eqnarray*}
(a_{i,j})=\left(
\begin{tabular}{ c c c}
 $\Lambda + \mu S + \beta SI$ & $-\beta SI$ & $0$\\ 
 $-\beta SI$ &  $\beta SI+(\mu+\gamma+\epsilon)^{2}I$ & $(\mu+\gamma+\epsilon)\gamma I$\\
 $0$ &  $(\mu+\gamma+\epsilon)\gamma I$ & $\gamma^{2} I+ \mu R$  
\end{tabular} \right).
\end{eqnarray*}

Next, we will consider the numerical example studied in \cite{ji2014threshold}. The parameters are given by
$$ \Lambda = 5, \quad \mu = 0.95, \quad \beta = 0.8, \quad \gamma = 0.8, \quad \epsilon = 0.6,$$
with initial conditions $(S(0),I(0),R(0))=(0.8,0.1,0.1)$ and domain $D=(0,1)\times (0,1) \times (0,1)$. Using this information, we graph the function $u_{0.8}$, see Figure \ref{FigSIR} (a), and determine that $u(0.8,0.1,0.1)=0.017$. This implies that if the time unit is decades, the entire population recovers in approximately two months. In other words, within about 62 days, all individuals will be infected and eventually acquire immunity. It is important to note that this model does not account for mortality.

Furthermore, from Figure \ref{FigSIR} (b), we obtain the following probabilities:
\begin{center}
\begin{tabular}{| c || c | c | c | c | c | c | c }
 $t$ &  $0.004$ & $0.008$ & $0.013$ & $0.025$ & $0.05$ & $0.1$  \\
 \hline
 $p^{(0.8,0.1,0.1)}[\tau_{D}>t]$ &  $0.8$ & $0.6$ & $0.4$ & $0.2$ & $0.05$ & $0.006$
\end{tabular}
\end{center}
For small times, say $t\leq 0.004$, it is unlikely that the infection will leave the region $D$, meaning the infectious process is still ongoing. Even if the time is doubled to $t\leq 0.008$, there is still a $40\%$ probability that the infection continues affecting the population. However, as time progresses, the probability $P^{(0.8,0.1,0.1)}[\tau_{D}>t]$ decreases rapidly. Indeed, for times greater than $0.013$, the probability that the disease continues affecting the population drops to $60\%$, reflecting the highly contagious nature of the infection. This is evident from the steep curve in the distribution of $\tau_{D}$ in Figure \ref{FigSIR} (b).

Additionally, from Figure \ref{FigSIR} (a), we observe that $u(0.8,0.04,0.16)=0.01$, which implies that a $40\%$ reduction in the disease increases the average exit time by $59\%$. The convex shape of the graph of $u_{0.8}$ indicates that swift intervention with health measures can significantly alter the exit time from the region $D$. Furthermore, it is very unlikely (less than $0.006$) that the infection persists for more than $0.1$ time units. The rapid progression of the infection suggests that implementing herd immunity would be highly advisable. After a brief chaotic period, the population could resume activities to compensate for the losses incurred during the infection stage. Once again, we emphasize that the model does not consider potential fatalities. This model could be applied, for example, to a concentrated population such as a school, dealing with a non-lethal infectious disease like the flu.   
\begin{figure}[]
    \begin{subfigure}[bt]{0.55\textwidth}
    \centering
    \hspace{-2cm}
       \includegraphics[scale=0.19]{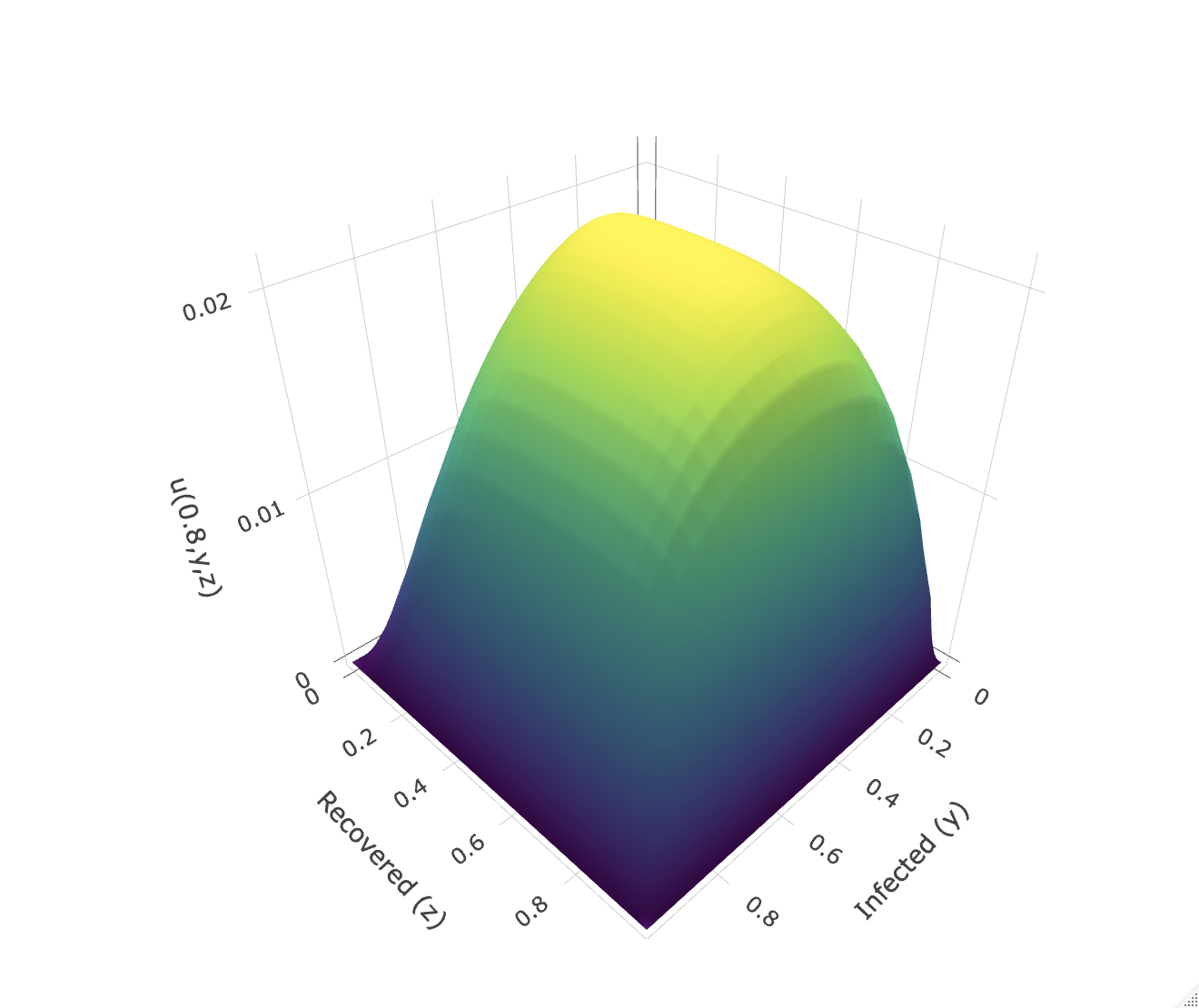}
       \caption{Graph of $(0.8,y,z) \to E^{(0.8,y,z)}[\tau_{D}]$.}  
    \end{subfigure}    
    \begin{subfigure}[bt]{0.5\textwidth}
    \centering
    \vspace{1cm}
    \hspace{-1.9cm}
       \includegraphics[scale=0.17]{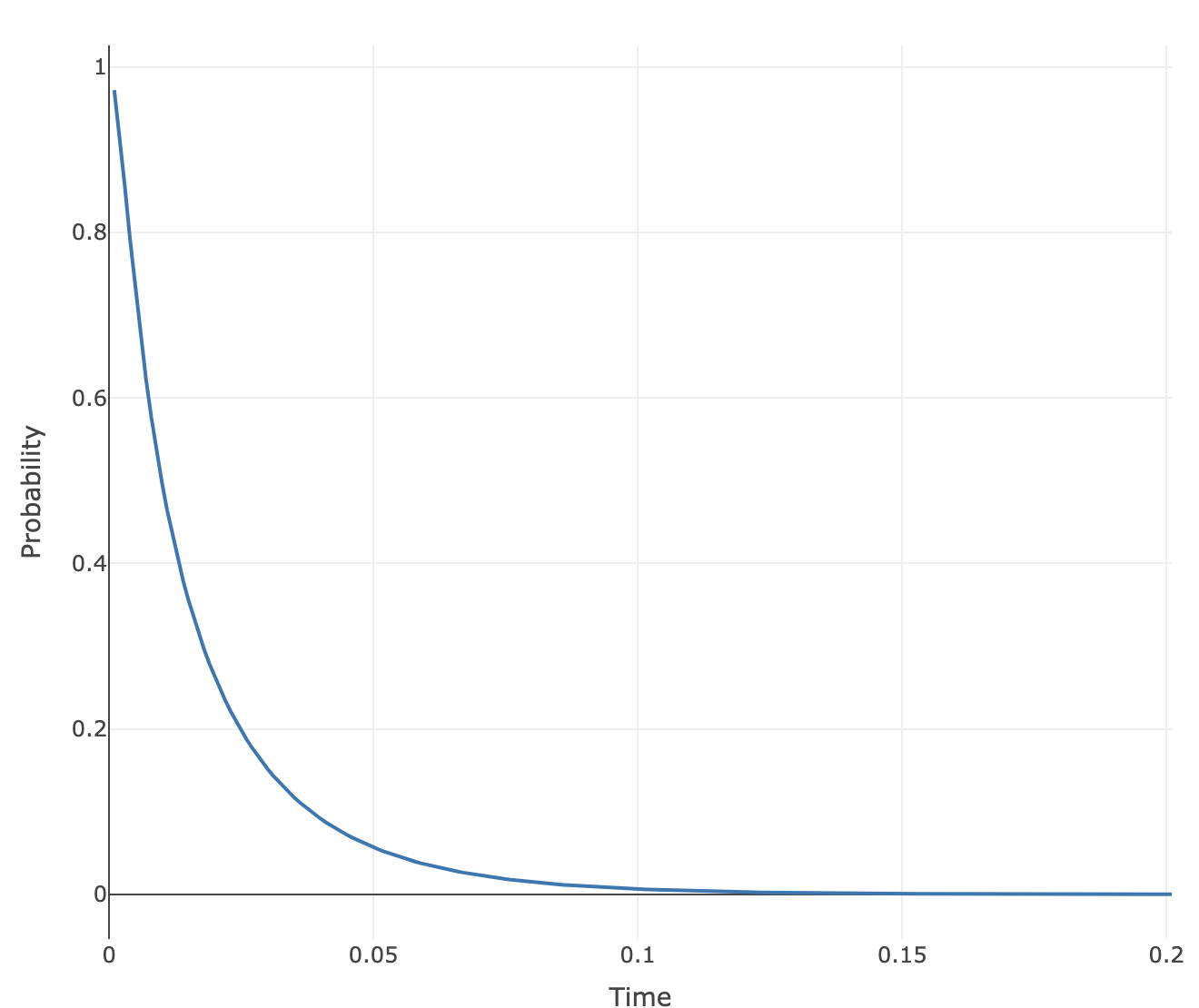}
      \hspace{-1cm} 
       \caption{Graph of $t \to P^{(0.8,0.2,0.6)}[\tau_{D}>t]$.}  
    \end{subfigure}
   \caption{The expectation and probability distribution of the SIR exit time for $D=(0,1)\times (0,1) \times (0,1)$.}  \label{FigSIR}  
\end{figure}

\item[A model for cancerous tumors.] 

As we have noted, to obtain the stochastic dynamics associated with a deterministic model, the dispersion matrix $\sigma$ is necessary. This matrix is the square root of the positive definite matrix $A$, see equation (\ref{matA}). The existence of $\sigma$ is generally established; however, except for the case $n=2$, it is not easy to explicitly determine its value, see \cite{allen2007modeling}. Some authors, aiming to obtain a stochastic model, consider an auxiliary matrix $\tilde{\sigma}$ such that $A=\tilde{\sigma}\tilde{\sigma}^{T}$, see \cite{allen2008construction}. This approach induces greater volatility in the system, as it requires more than $n$ Brownian motions. Consequently, the resulting stochastic system may be of higher dimension, making the application of numerical methods to solve systems (\ref{PDir}) and (\ref{PVI}) more challenging. However, as we have previously noted, our results do not require explicit knowledge of the matrix $\sigma$ but only of the matrix $A$, which can always be explicitly determined.

To illustrate this point, we have considered the model of cancerous tumor growth, which is described in detail in \cite{liu2018deterministic}. The deterministic dynamics of the model are given by the following system of differential equations
\begin{eqnarray*}
\frac{dE}{dt}&=& s+\frac{\rho ET}{\alpha+T}-\beta_{1} ET-d_{1}E-c_{1}E,\\
\frac{dN}{dt}&=& r_{1}N(1-b_{1}N)-\beta_{2}NT-c_{2}N,\\
\frac{dT}{dt}&=& r_{2}T(1-b_{2}T)-\beta_{3}ET-\beta_{4}NT-c_{3}T,
\end{eqnarray*}
where the parameters $s$, $\rho$, $\alpha$, $\beta_{1}$, $d_{1}$, $c_{1}$, $r_{1}$, $b_{1}$, $\beta_{2}$, $c_{2}$, $r_{2}$, $b_{2}$, $\beta_{3}$, $\beta_{4}$, and $c_{3}$ are non-negative real numbers. The variable $E$ denotes the population of effector immune cells, $N$ represents the population of normal cells, and $T$ is the population of tumor cells.

Let us denote the tumor growth dynamics by $Y=(E,N,T)^{T}$. Following the scheme of the previous examples, we know that over a small time interval $\Delta t$, the vector $\Delta Y$ can take only the following five states
\begin{gather*}
(1,0,0)^{T}, \ (-\beta_{1},0,-\beta_{3})^{T}, \ (-1,0,0)^{T}, \ (0,1,0)^{T}, \\
(0,-\beta_{2},-\beta_{4})^{T}, \ (0,-1,0), \ (0,0,1)^{T}, \ (0,0,-1)^{T}, \ (0,0,0)^{T}.
\end{gather*}
The probability assignments to the corresponding changes are shown in Table \ref{TabCan}.
\begin{table}[]
\centering
\begin{tabular}{ l | l }
Change & Probability \\ 
 \hline \hline
 $\Delta Y^{1}=(1,0,0)^{T}$ &  $p_{1}:= f_{1}(E,T) \Delta t$\\
 $\Delta Y^{2}=(-\beta_{1},0,-\beta_{3})^{T}$ &  $p_{2}:= g_{1}(E,T) \Delta t$\\
 $\Delta Y^{3}=(-1,0,0)^{T}$ &  $p_{3}:= g_{2}(E) \Delta t$\\
 $\Delta Y^{4}=(0,1,0)^{T}$ &  $p_{4}:= f_{2}(N) \Delta t$\\
 $\Delta Y^{5}=(0,-\beta_{2},-\beta_{4})^{T}$ &  $p_{5}:= g_{3}(N,T) \Delta t$\\
 $\Delta Y^{6}=(0,-1,0)^{T}$ &  $p_{6}:= g_{4}(N) \Delta t$\\
 $\Delta Y^{7}=(0,0,1)^{T}$ &  $p_{7}:= f_{3}(T) \Delta t$\\
 $\Delta Y^{8}=(0,0,-1)^{T}$ &  $p_{8}:= g_{5}(T) \Delta t$\\
 $\Delta Y^{9}=(0,0,0)^{T}$ &  $p_{9}:= 1- \sum_{i=1}^{8} p_{i}$     
\end{tabular}
\caption{Probabilities for changes in the cencerous tumor model.} \label{TabCan}
\end{table}

On the other hand, the functions $f_{i}$ and $g_{i}$ are given as follows
\begin{gather*} 
f_{1}(E,T)=s+\frac{\rho ET}{\alpha+T}, \quad g_{1}(E,T)= ET, \quad g_{2}(E)= (d_{1}+c_{1})E,  \quad f_{2}(N)= r_{1}N(1-b_{1}N), \\
g_{3}(N,T)= NT, \quad g_{4}(N)=c_{2}N, \quad f_{3}(T)= r_{2}T(1-b_{2}T), \quad g_{5}(T)= c_{3}T.
\end{gather*}
Thus, our stochastic system of interest is expressed as follows 
\begin{equation}\label{EDETumor}
\left\lbrace 
\begin{tabular}{ l l l}
 $dE(t)$ & $=$ &  $(s+\frac{\rho ET}{\alpha+T}-\beta_{1} ET-d_{1}E-c_{1}E) dt + \sum_{i=1}^{3} \sigma_{1,i}(E,N,T) \ dW_{i}(t)$, \\ 
 $dN(t)$ & $=$ &  $(r_{1}N(1-b_{1}N)-\beta_{2}NT-c_{2}N) dt + \sum_{i=1}^{3} \sigma_{2,i}(E,N,T) \ dW_{i}(t)$,\\
 $dT(t)$ & $=$ &  $(r_{2}T(1-b_{2}T)-\beta_{3}ET-\beta_{4}NT-c_{3}T) dt + \sum_{i=1}^{3} \sigma_{3,i}(E,N,T) \ dW_{i}(t)$,
\end{tabular} \right.
\end{equation}
where $\sigma=(\sigma_{i,j}(E,N,T))$ is the square root of the matrix $A=(a_{i,j})$, where
\begin{align*}
&a_{1,1}= s+\frac{\rho ET}{\alpha+T}+\beta_{1}^{2}ET+(d_{1}+c_{1})E, \\
&a_{1,2}= a_{2,1}= 0, \\
&a_{1,3}= a_{3,1}= \beta_{1} \beta_{3}ET, \\
&a_{2,2}= r_{1}N(1-b_{1}N)+\beta_{2}^{2}NT+c_{2}N, \\
&a_{2,3}= a_{3,2}= \beta_{2} \beta_{4} NT, \\
&a_{3,3}= \beta_{3}^{2}ET+\beta_{4}^{2}NT+c_{3}T+ r_{2}T(1-b_{2}T).
\end{align*}

The system (\ref{EDETumor}) requires only three Brownian motions, unlike the stochastic system studied in \cite{liu2018deterministic}, which requires eight Brownian motions. We do not know of a generic software to solve the Partial Differential Equations corresponding to types (\ref{PDir}) and (\ref{PVI}). However, if necessary, an explicit numerical method would need to be applied to solve them. Therefore, it is advisable not to introduce more volatility into the deterministic system than necessary, in order to apply well known numerical software.

To provide a numerical example, let us consider the case studied in \cite{liu2018deterministic} with the following parameters
\begin{gather*}
s=1, \quad \rho=0.3, \quad \alpha= 0.8, \quad \beta_{1}= 1, \quad d_{1}=0.3, \quad c_{1}=0.2, \quad r_{1}=0.7, \\
b_{1}=0.6, \quad \beta_{2}=0.1, \quad c_{2}=0.2, \quad r_{2}=2.3 \quad b_{2}=0.2, \quad \beta_{3}=0.3 \quad \beta_{4}=0.3, \quad c_{3}=0.2.
\end{gather*}
The initial value is the point $(E(0),N(0),T(0))=(3,1.5,1)$, and the domains are $D_{1}=(0,4)\times (0,2) \times (0,2)$ and $D_{2}=(0,4)\times (0,2) \times (0,4)$. From Figure \ref{FigTumor} (a) and (c), we observe, respectively, that $E^{(3,1.5,1)}[\tau_{D_{1}}]=0.38$ and $E^{(3,1.5,1)}[\tau_{D_{2}}]=0.6$. The modification of the domains consisted of allowing the number of tumor cells to double without varying the ranges of the other variables. With this change, the exit time from domain $D_{2}$ increased by $79\%$, and this increase of $0.22$ time units was exclusively due to now permitting a greater number of cancer cells. Assuming that the other variables, $E$ and $N$, lie in $(0,4)$ and $(0,2)$, respectively, then variable $T$ takes $0.38$ time units to exit the interval $(0,2)$, while exiting the interval $(0,4)$ requires an additional $0.22$ time units. This moderate increase in the average exit time gives us an indication that the growth of cancer cells is relatively slow. Moreover, from Figure \ref{FigTumor} (b) and (d), we see that:
\begin{center}
\begin{tabular}{| c || c | c | c | c | c | c | c | c | c | }
 $t$ &  $0.1$ & $0.2$ & $0.3$ & $0.4$ & $0.5$ & $0.6$ \\
 \hline
 $P^{(0.8,0.1,0.1)}[\tau_{D_{1}}>t]$ &  $0.58$ & $0.35$ & $0.23$ & $0.15$ & $0.11$ & $0.07$\\ \hline
 $P^{(0.8,0.1,0.1)}[\tau_{D_{2}}>t]$ &  $0.63$ & $0.45$ & $0.35$ & $0.27$ & $0.21$ & $0.17$\\
\end{tabular}
\end{center}
which indicates that at the beginning, the exit times are relatively close, that is, the process $Y=(E,N,T)$ exits via the first two variables, and as time progresses, the probability of the exit time from $D_{2}$ is twice that of $D_{1}$, reflecting the increase in the range of $T$. From here, we can deduce that for this cancer tumor model, the growth of cancer cells is severe. Indeed, the small probabilities of $P^{(0.8,0.1,0.1)}[\tau_{D_{i}}>t]$, $t\geq 0.5$, indicate that the process quickly leaves the domains $D_{i}$, $i=1,2$. This seems to indicate that early-stage anticancer treatment is most advisable.
\begin{figure}[]
    \begin{subfigure}[bt]{0.5\textwidth}
    \centering
       \includegraphics[scale=0.19]{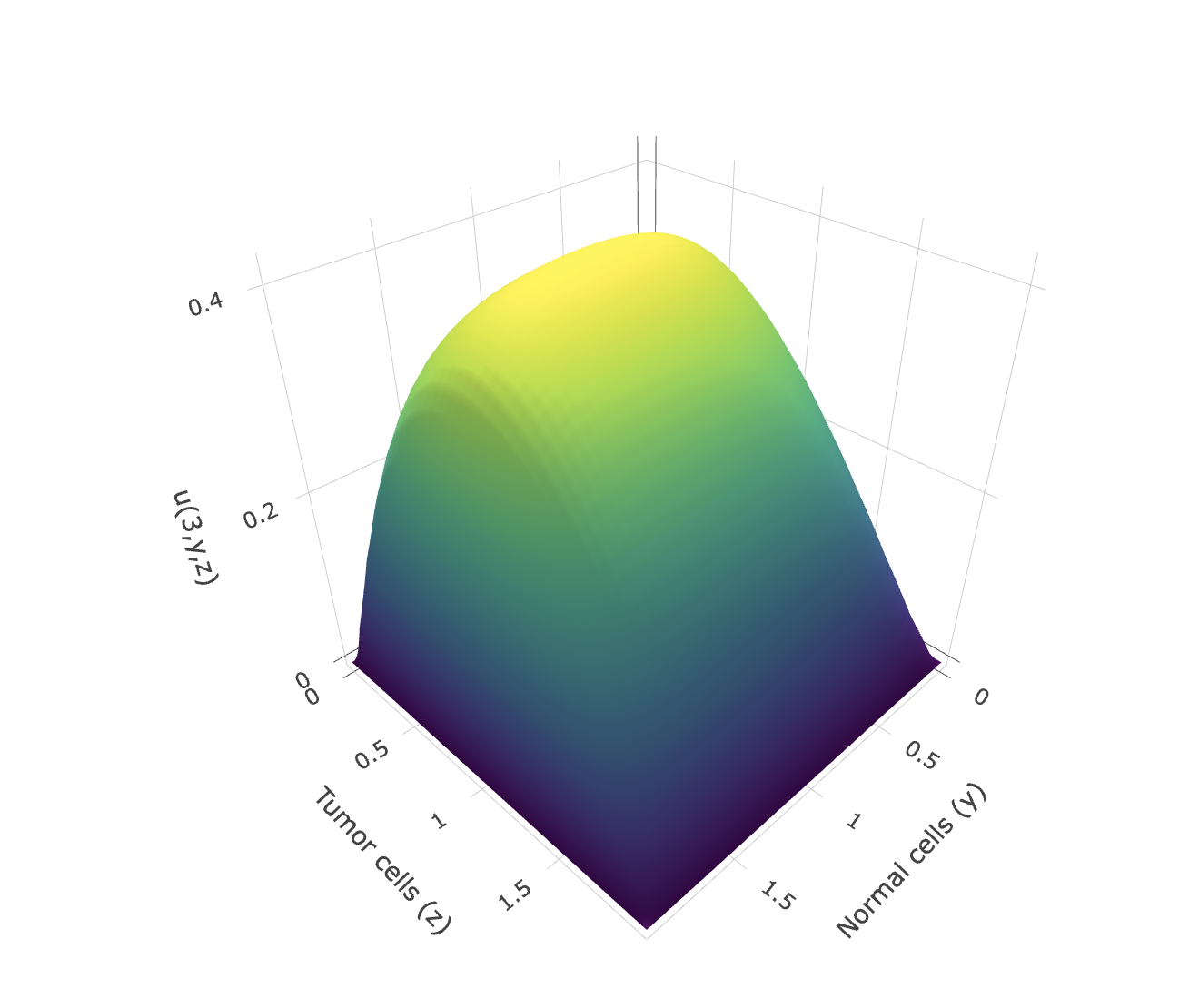}
       \caption{Graph of $(3,y,z) \to E^{(3,y,z)}[\tau_{D_{1}}]$.}  
   \end{subfigure}    
    \begin{subfigure}[bt]{0.5\textwidth}
    \centering
     \vspace{1cm}
       \includegraphics[scale=0.16]{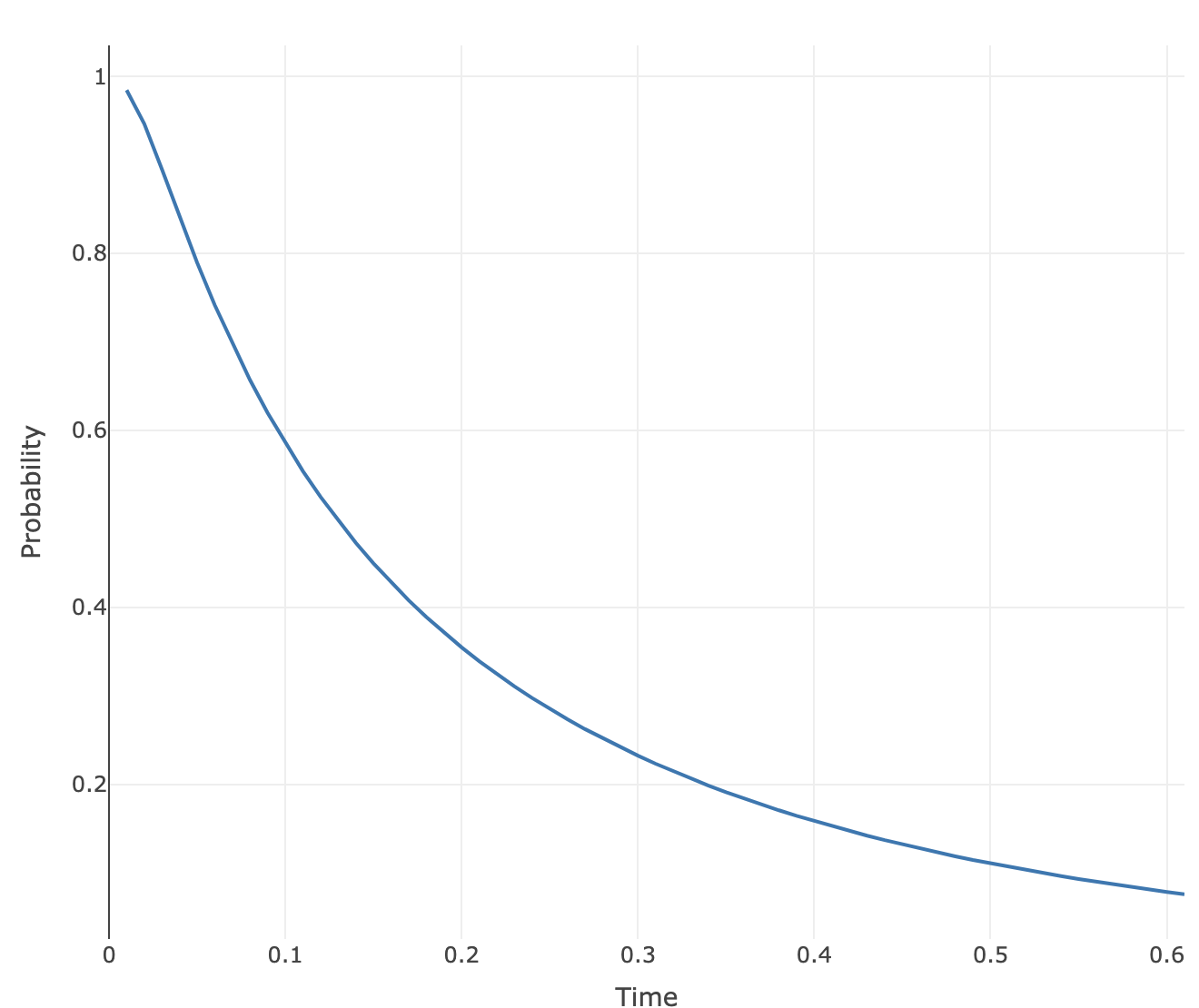}
       \caption{Graph of $t \to P^{(3,1.5,1)}[\tau_{D_{1}}>t]$.}  
    \end{subfigure}
   \begin{subfigure}[bt]{0.5\textwidth}
    \centering
       \includegraphics[scale=0.19]{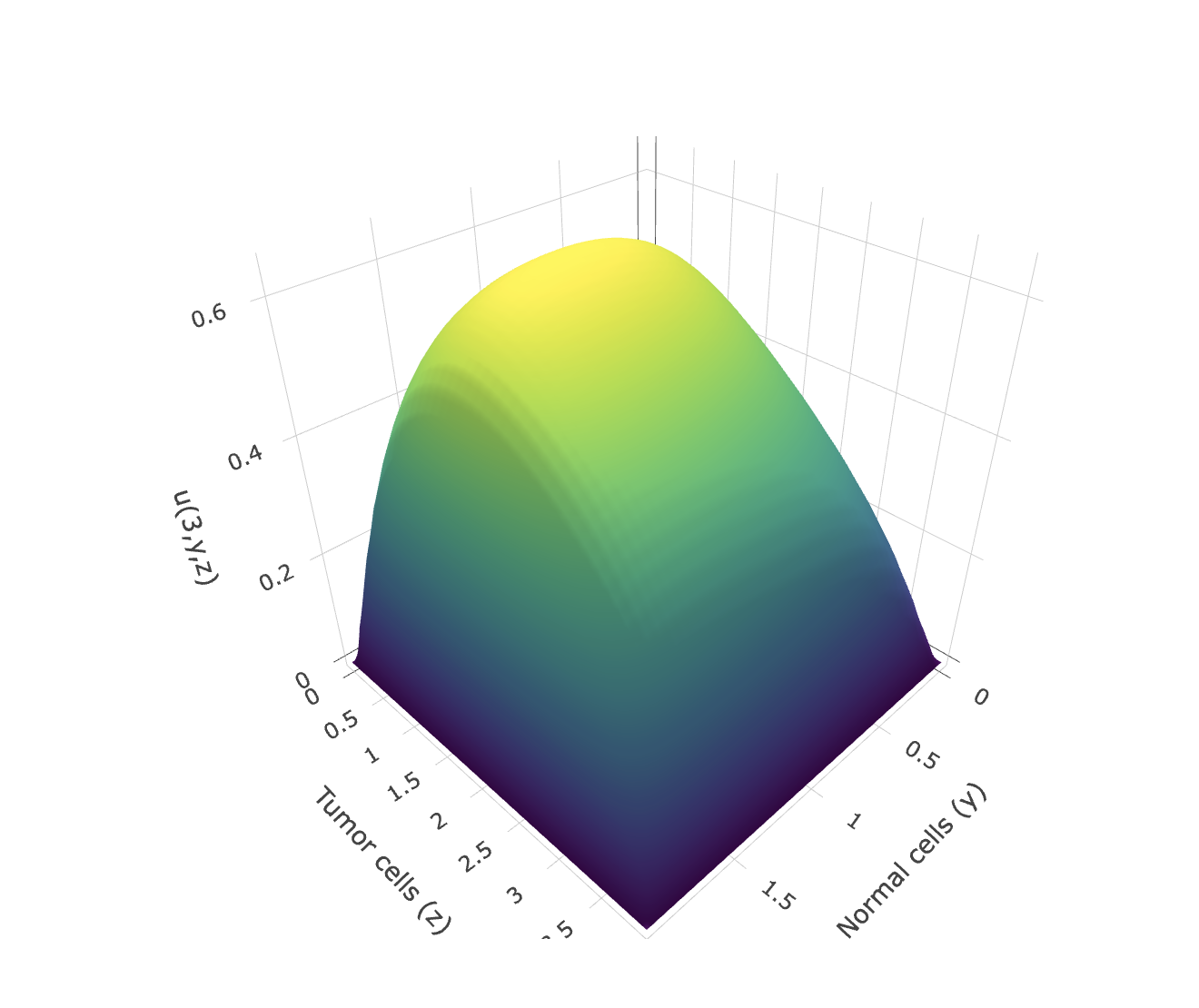}
       \caption{Graph of $(3,y,z) \to E^{(3,y,z)}[\tau_{D_{2}}]$.}  
   \end{subfigure}    
    \begin{subfigure}[bt]{0.5\textwidth}
    \centering
     \vspace{1cm}
       \includegraphics[scale=0.16]{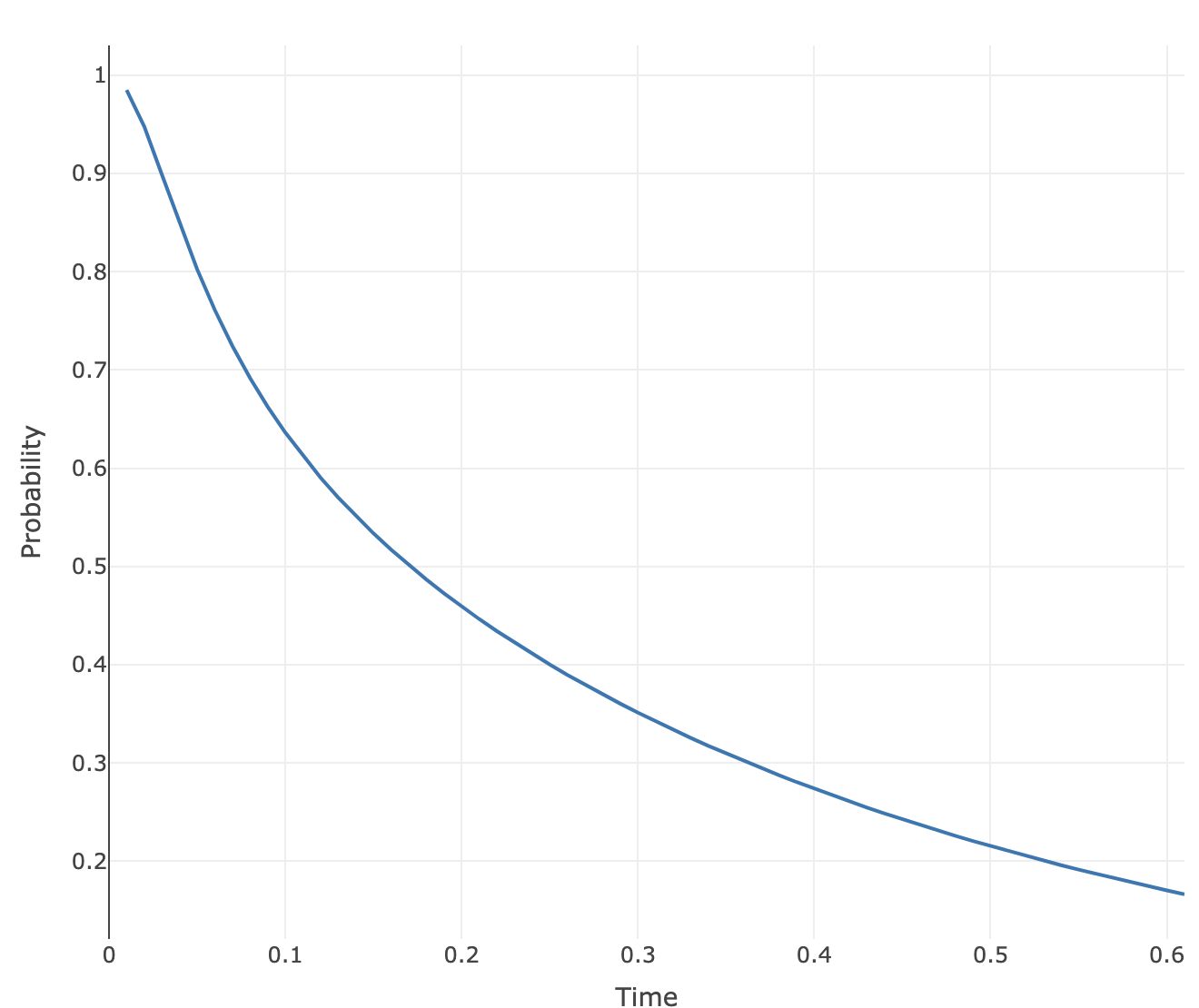}
       \caption{Graph of $t \to P^{(3,1.5,1)}[\tau_{D_{2}}>t]$.}  
    \end{subfigure} 
   \caption{The expectation and probability distribution for the exit time  of $D_{1}$ and $D_{2}$ for the tumor model.}  \label{FigTumor}  
\end{figure}
\end{description}

\section{Conclusions and Perspectives}

In this work, we have used the “perturbation by the states of the dynamical system” technique to extend certain deterministic models into stochastic frameworks, thereby accounting for random perturbations.  This is just one of many methods for generating stochastic models: starting from a system of ordinary differential equations, we assume that, over sufficiently small time intervals, the system transitions between states with specified probabilities (see \cite{allen2007modeling}).  Employing stochastic‐calculus techniques, we then derive two fundamental quantities: the mean exit time and the exit‐time distribution of the stochastic process from a bounded domain.  These quantities are obtained by solving elliptic and parabolic partial differential equations, respectively.  

We demonstrated the feasibility of constructing a stochastic model from a deterministic one, ensuring that the average behavior of the stochastic system faithfully reproduces that of its deterministic counterpart. Furthermore, we proposed a numerical scheme implemented in FreeFEM, suitable for 2D and 3D problems, which bridges theoretical results with practical applications. The illustrative examples provided highlight the effectiveness of our approach.

While our current focus is limited to 2D and 3D settings due to software constraints, future research could explore numerical methods capable of addressing problems in higher dimensions. Advancing specialized software for solving partial differential equations in higher-dimensional settings would significantly expand the applicability and impact of this work. Additionally, integrating artificial intelligence techniques to optimize and expedite the solution of partial differential equations offers a promising avenue for future research.

In summary, this study provides a robust foundation for analyzing stochastic processes in multidimensional domains, offering both theoretical insights and practical tools for their application. Future directions include extending the framework to higher dimensions, developing advanced software solutions, and leveraging AI-driven methodologies to enhance computational efficiency.

\section{Appendix}
 
Below, we present the FreeFem codes used in the example of cancer cell growth. The codes for the other examples can be adapted using this as a reference.
 
\bigskip
 
\noindent // Code to solve the equation (\ref{PDir}) \\

\noindent // Definition of the domain D=(x1,x2) $\times$ (y1,y2) $\times$ (z1,z2) and its mesh \\
\noindent load "msh3"  // Load the module that allows you to work with three-dimensional meshes\\
real x1 = 0, x2 = 4, y1 = 0, y2 = 2, z1 = 0, z2 = 4; \\
int k = 40; // Number of divisions of the sides of D \\
mesh3 Th = cube(k, k, k, [x*(x2-x1)+x1,y*(y2-y1)+y1,z*(z2-z1)+z1]); \\

\noindent // Parameter definitions \\
real ss = 1, ro = 0.3, alfa = 0.8, beta1 = 1, d1 =0.3, c1 = 0.2, r1 = 0.7, bb1 = 0.6; \\
real beta2 = 0.1, c2 = 0.2, r2 = 2.3, bb2 = 0.2, beta3 = 0.3, beta4 = 0.3, c3 = 0.2; \\

\noindent // Definition of functions  \\
func a11  = ss + ro*x*z/(alfa+z)+ beta1*beta1*x*z + (d1+c1)*x;  \\
func a12  = 0; \\
func a13  = beta1*beta3*x*z; \\
func a21  = 0; \\
func a22  = r1*y*(1-bb1*y)+beta2*beta2*y*z + c2*y; \\
func a23  = beta2*beta4*y*z;  \\
func a31  = beta1*beta3*x*z;  \\
func a32  = beta2*beta4*y*z;  \\
func a33  = beta3*beta3*x*z + beta4*beta4*y*z + c3*z + r2*z*(1-bb2*z);  \\
func da11  = ro*z/(alfa+z)+beta1*beta1*z+d1+c1; \\
func da12  = 0; \\
func da13  = beta1*beta3*x; \\
func da21  = 0; \\
func da22  = r1-2*r1*bb1*y+beta2*beta2*z+c2; \\
func da23  = beta2*beta4*y; \\
func da31  = beta1*beta3*z; \\
func da32  = beta2*beta4*z;  \\
func da33  = beta3*beta3*x+beta4*beta4*y+c3+r2-2*r2*bb2*z;  \\
func b1 = ss+ro*x*z/(alfa+z)-beta1*x*z-(d1+c1)*x; \\
func b2 = r1*y*(1-bb1*y)-beta2*y*z-c2*y; \\
func b3 = r2*z*(1-bb2*z)-beta3*x*z-beta4*y*z-c3*z; \\

\noindent fespace Vh(Th, P1); // Defines the function space on the three-dimensional mesh \\
Vh u, w; \\
func g = 0;  // Define the boundary condition \\

\noindent // Define the variational problem \\
problem Problem(u, w) = int3d(Th)( \\
       0.5*a11*dx(u)*dx(w) + 0.5*a12*dx(u)*dy(w) + 0.5*a13*dx(u)*dz(w) \\
     + 0.5*a21*dy(u)*dx(w) + 0.5*a22*dy(u)*dy(w) + 0.5*a23*dy(u)*dz(w) \\
     + 0.5*a31*dz(u)*dx(w) + 0.5*a32*dz(u)*dy(w) + 0.5*a33*dz(u)*dz(w) \\
     + 0.5*da11*dx(u)*w + 0.5*da12*dx(u)*w + 0.5*da13*dx(u)*w  \\
     + 0.5*da21*dy(u)*w + 0.5*da22*dy(u)*w + 0.5*da23*dy(u)*w  \\
     + 0.5*da31*dz(u)*w + 0.5*da32*dz(u)*w + 0.5*da33*dz(u)*w  \\
     - b1*dx(u)*w - b2*dy(u)*w - b3*dz(u)*w) // Bilinear part  \\
  - int3d(Th)(w) // Linear part  \\
  + on(1, 2, 3, 4, 5, 6, u = g); // Dirichlet conditions on all edges \\

\noindent Problem; // Solve the problem \\
\noindent ofstream fout("TumorE.txt"); // Open the file for writing \\
\noindent real x0 = 3; // Define the section to graph \\
\noindent real epsilon = 1e-6;  // Defines the tolerance for the comparison \\

\noindent // Evaluate u on the plane $x \approx  x0$ \\
for (int i = 0; i $<$ Th.nv; i++) \{ \\
    real xx = Th(i).x; // x coordinate of the vertex \\
    real yy = Th(i).y; // y coordinate of the vertex \\
    real zz = Th(i).z; // z coordinate of the vertex \\
    // Just save the points where $x \approx x0$ \\
    if (abs(xx - x0) $<$ epsilon) \{   \\     
        real uVal = u(Th(i).x, Th(i).y, Th(i).z); // Evaluate u at the vertex \\
        fout $<<$ yy $<<$ " " $<<$ zz $<<$ " " $<<$ uVal $<<$ endl; // Save the vector (y, z, u) \\
    \} \ 
\}

\bigskip

We will now provide the FreeFEM code to solve the Parabolic Differential Equation.

\bigskip

\noindent // Code to solve the equation (\ref{PVI}) \\

\noindent // Definition of the domain D=(x1,x2) $\times$ (y1,y2) $\times$ (z1,z2) and its mesh \\
\noindent load "msh3" \\
\noindent real x1 = 0, x2 = 4, y1 = 0, y2 = 2, z1 = 0, z2 = 4; \\
int k = 40; // Number of divisions of the sides of D \\
mesh3 Th = cube(k, k, k, [x*(x2-x1)+x1,y*(y2-y1)+y1,z*(z2-z1)+z1]); \\

\noindent // Definition of parameters \\
real ss = 1, ro = 0.3, alfa = 0.8, beta1 = 1, d1 =0.3, c1 = 0.2, r1 = 0.7, bb1 = 0.6; \\
real beta2 = 0.1, c2 = 0.2, r2 = 2.3, bb2 = 0.2, beta3 = 0.3, beta4 = 0.3, c3 = 0.2; \\

\noindent // Definition of functions \\
func a11  = ss + ro*x*z/(alfa+z)+ beta1*beta1*x*z + (d1+c1)*x; \\
func a12  = 0; \\
func a13  = beta1*beta3*x*z; \\
func a21  = 0; \\
func a22  = r1*y*(1-bb1*y)+beta2*beta2*y*z + c2*y; \\
func a23  = beta2*beta4*y*z; \\
func a31  = beta1*beta3*x*z; \\
func a32  = beta2*beta4*y*z; \\
func a33  = beta3*beta3*x*z + beta4*beta4*y*z + c3*z + r2*z*(1-bb2*z); \\
func da11  = ro*z/(alfa+z)+beta1*beta1*z+d1+c1; \\
func da12  = 0; \\
func da13  = beta1*beta3*x; \\
func da21  = 0; \\
func da22  = r1-2*r1*bb1*y+beta2*beta2*z+c2; \\
func da23  = beta2*beta4*y; \\
func da31  = beta1*beta3*z; \\
func da32  = beta2*beta4*z; \\
func da33  = beta3*beta3*x+beta4*beta4*y+c3+r2-2*r2*bb2*z; \\
func b1 = ss+ro*x*z/(alfa+z)-beta1*x*z-(d1+c1)*x; \\
func b2 = r1*y*(1-bb1*y)-beta2*y*z-c2*y; \\
func b3 = r2*z*(1-bb2*z)-beta3*x*z-beta4*y*z-c3*z; \\

\noindent func g = 0; // Definition of boundary condition \\
real dt = 0.01; // Step size over time \\

\noindent // Definition of the test function and the solution \\
fespace Vh(Th, P1); // Defines the space of interpolation functions \\
Vh u, w, uold; // Variables for the solution \\

\noindent // Define the variational problem \\
problem Proba(u, w) = int3d(Th)( \\
       dt*0.5*a11*dx(u)*dx(w) + dt*0.5*a12*dx(u)*dy(w) + dt*0.5*a13*dx(u)*dz(w) \\
     + dt*0.5*a21*dy(u)*dx(w) + dt*0.5*a22*dy(u)*dy(w) + dt*0.5*a23*dy(u)*dz(w) \\
     + dt*0.5*a31*dz(u)*dx(w) + dt*0.5*a32*dz(u)*dy(w) + dt*0.5*a33*dz(u)*dz(w) \\
     + dt*0.5*da11*dx(u)*w + dt*0.5*da12*dx(u)*w + dt*0.5*da13*dx(u)*w \\
     + dt*0.5*da21*dy(u)*w + dt*0.5*da22*dy(u)*w + dt*0.5*da23*dy(u)*w \\
     + dt*0.5*da31*dz(u)*w + dt*0.5*da32*dz(u)*w + dt*0.5*da33*dz(u)*w \\
     - dt*b1*dx(u)*w - dt*b2*dy(u)*w - dt*b3*dz(u)*w +w*u) // Bilinear part \\
  - int3d(Th)(w*uold) // Linear part \\
  + on(1, 2, 3, 4, 5, 6, u = g); // Dirichlet conditions on all edges \\

\noindent // Initial conditions \\
u = 1; // Initial temperature throughout the domain \\
uold = 0; // Initialize uold  \\

\noindent // Find the index of the node closest to (xini, yini, zini) \\
real xini = 0.3, yini = 1.5, zini = 1;\\
int closestNode = -10;\\
real minDist = 1e30;\\
\noindent for (int j = 0; j $<$ Th.nv; j++) \{\\
    real dist = sqrt((Th(j).x - xini)$\wedge$2 + (Th(j).y - yini)$\wedge$2 + (Th(j).z - zini)$\wedge$2);\\
    if (dist $<$ minDist) \{ \\
        minDist = dist; \\
        closestNode = j; \\
    \} \
\}

\noindent // File to save the temporal evolution in (xini, yini, zini) \\
ofstream fout("TumorP.txt");\\

\noindent // Time loop \\
real T = 0.6; // Time interval [0, T] \\
real t = 0;   \\ 
for (int i = 0; i $<=$ (T/dt); i++) \{  \\
  t = t + dt;  \\
  uold = u; // Update the previous solution  \\
  Proba; // Solve the problem \\
// Store the value of u at the node closest to (xini, yini, zini) at this time  \\
 fout $<<$ t $<<$ " " $<<$ u[ ][closestNode] $<<$ endl;
\}

\section*{Acknowledgments}

The author J.V.-M. received partial support from grant PIM25-2 from the Universidad Aut\'onoma de Aguascalientes and Secihti.

\bibliographystyle{unsrt}

\bibliography{PrModels_III}

\end{document}